\newcommand{\R}{\mathbb{R}}
\newcommand{\Kcal}{\mathcal{K}}
\newcommand{\Ucal}{\mathcal{U}}
\newcommand{\Xcal}{\mathcal{X}}
\newcommand{\Vcal}{\mathcal{V}}
\newcommand{\bzero}{\mathbf{0}}
\newcommand{\bu}{\mathbf{u}}
\newcommand{\bubar}{\mathbf{\bar{u}}}
\newcommand{\ubar}{\bar{u}}
\newcommand{\bx}{\mathbf{x}}
\newcommand{\bxi}{\boldsymbol{\xi}}
\newcommand{\bxdot}{\dot{\mathbf{x}}}
\newcommand{\bX}{\mathbf{X}}
\newcommand{\by}{\mathbf{y}}
\newcommand{\bv}{\mathbf{v}}
\newcommand{\bz}{\mathbf{z}}
\newcommand{\bB}{\mathbf{B}}
\newcommand{\bF}{\mathbf{F}}
\newcommand{\bG}{\mathbf{G}}
\newcommand{\bH}{\mathbf{H}}
\newcommand{\bK}{\mathbf{K}}
\newcommand{\bPhi}{\boldsymbol{\Phi}}
\newcommand{\bPsi}{\boldsymbol{\Psi}}
\newcommand{\bPsidot}{\boldsymbol{\dot{\Psi}}}
\newcommand{\bnabla}{\boldsymbol{\nabla}}
\newtheorem{theorem}{Theorem}[section]
\newtheorem{remark}[theorem]{Remark}
\newtheorem{example}[theorem]{Example}
\let\OLDthebibliography\thebibliography
\renewcommand\thebibliography[1]{
	\OLDthebibliography{#1}
	\setlength{\parskip}{0pt}
	\setlength{\itemsep}{0pt plus 0.3ex}
}
\date{}
\begin{document}
	
	\title{Data-Driven Model Predictive Control using Interpolated Koopman Generators}
	\author[1]{Sebastian Peitz}
	\author[2]{Samuel E.~Otto}
	\author[2]{Clarence W.~Rowley}
	\affil[1]{\normalsize Department of Mathematics, Paderborn University, Germany}
	\affil[2]{\normalsize Department of Mechanical and Aerospace Engineering, Princeton University, USA}
		
	\maketitle
	
	\begin{abstract}
		In recent years, the success of the Koopman operator in dynamical systems analysis has also fueled the development of Koopman operator-based control frameworks.
		In order to preserve the relatively low data requirements for an approximation via Dynamic Mode Decomposition, a quantization approach was recently proposed in \cite{PK19}. 
		This way, control of nonlinear dynamical systems can be realized by means of switched systems techniques, using only a finite set of autonomous Koopman operator-based reduced models. 
		These individual systems can be approximated very efficiently from data. The main idea is to transform a control system into a set of autonomous systems for which the optimal switching sequence has to be computed. 
		In this article, we extend these results to continuous control inputs using relaxation.
		This way, we combine the advantages of the data efficiency of approximating a finite set of autonomous systems with continuous controls. 
		We show that when using the Koopman generator, this relaxation --- realized by linear interpolation between two operators --- does not introduce any error for control affine systems. 
		This allows us to control high-dimensional nonlinear systems using bilinear, low-dimensional surrogate models. 
		The efficiency of the proposed approach is demonstrated using several examples with increasing complexity, from the Duffing oscillator to the chaotic fluidic pinball.
	\end{abstract}

	\section{Introduction}
	\label{sec:Introduction}
	
	Real-time control of high-dimensional dynamical systems is a very challenging task, in particular for nonlinear systems; see, e.g., \cite{BN15} for a recent survey on turbulent flow control. 
	To this end, advanced control techniques such as \emph{Model Predictive Control (MPC)} \cite{GP17} or machine learning-based control \cite{DBN17} have gained more and more attention in recent years. 
	In MPC, an open-loop optimal control is computed repeatedly on a finite-time horizon using a model of the system dynamics. 
	This results in a feedback loop, but the open-loop problem has to be solved in a very short time, which is in general infeasible for complex systems such as nonlinear partial differential equations (PDEs), at least when using classical discretization approaches such as finite elements.
	
	A remedy to this problem is to use a surrogate model which can be solved significantly faster; see \cite{LMQR14,BGW15} for overviews. 
	Besides classical approaches such as \emph{Proper Orthogonal Decomposition (POD)} \cite{Sir87,KV99,Row05,HV05,BDPV18}, an approach that has attracted a lot of attention in recent years is to construct a \emph{reduced order model (ROM)} by means of the \emph{Koopman operator} \cite{Koo31}. 
	This is an infinite-dimensional but linear operator describing the dynamics of observables, and the approach can even be applied to sensor measurements or in situations where the underlying system dynamics are unknown. 
	Significant advances have been made over the past years both theoretically (see, e.g., \cite{Mez05,BMM12}) as well as numerically. 
	In the latter case, \emph{Dynamic Mode Decomposition (DMD)} \cite{Sch10,RMB+09,TRL+14,KGPS18} or \emph{Extended Dynamic Mode Decomposition (EDMD)} \cite{WKR15,KKS16,KNP+19} are the most popular algorithms. 
	
	More recently, various attempts have been made to use Koopman operator-based ROMs for control \cite{PBK15,PBK18,BBPK16,KM18b,KKB17,Williams2016extending}. 
	In many of these approaches, the Koopman operator is approximated for an augmented state (consisting of the actual state and the control) in order to deal with the non-autonomous control system. 
	Alternatively, the approach presented in \cite{Williams2016extending} treats the control like a time-varying parameter in order to learn EDMD-like models that interpolate Koopman operator approximations over the range of input.
	Since the input affects the Koopman operator and its eigenfunctions in a nonlinear way, it is necessary to include many nonlinear functions of the input in addition to the state variables in all of the above methods.
	For this reason, large amounts of data are required to cover a sufficiently large range of the dynamics. 
	An alternative approach has been presented in \cite{PK19}, where the control system is replaced by a set of autonomous systems with constant inputs. 
	This way, the optimal control problem is transformed into a switching problem between linear systems in a nonlinear feature space.
	If enough nonlinear features are taken, then the original objective can be approximated arbitrarily closely \cite{KM18a}. 
	The major advantage of this approach is that the required amount of data is very small \cite{PK20}.
	However, a drawback is that the resulting control problem is of combinatorial nature and that the input is restricted to a finite set.
	
	In this article, we begin to examine how the Koopman operators and generators can be properly interpolated in the sense of \cite{Williams2016extending} to yield efficient data-driven reduced-order models for model predictive control (MPC).
	When the dynamics are control-affine, we show that the Koopman generators are affine with respect to the input (which was also observed in \cite{Goswami2017global} in a similar setting), thereby justifying the use of affine interpolation between models based on the Koopman generator.
	This affine property is advantageous since it allows us to construct bilinear reduced order models using a more data-efficient version of the method in \cite{Williams2016extending} to approximate the Koopman generators.
	Furthermore, we show that input-affine interpolation of finite-time Koopman operators is accurate to first-order in time, yielding bilinear discrete-time models that can efficiently predict dynamics with short zero-order holds on the input.
	
	These efficient models in continuous and discrete time enable us to extend the MPC ideas of \cite{PK19, PK20} to utilize continuous inputs, rather than switching between fixed input levels, without increasing the training data requirements.
	In addition, we note that the affine interpolation methods we propose are closely related to the relaxation approaches used to solve the switching problem in mixed-integer optimal control \cite{Sag09}.
	When the control enters in a nonlinear way, our approach can therefore be viewed as a relaxation of the switching MPC approaches \cite{PK19, PK20}, allowing for intermediate control values.
	Embedded in an MPC framework, we show that the affine interpolation approaches yields remarkable results for several nonlinear control systems. 
	As examples, we consider several systems of increasing complexity, from the Duffing oscillator over the Burgers equation to the \emph{fluidic pinball} \cite{DPMN18}, a fluid flow problem with chaotic behavior governed by the 2D incompressible Navier--Stokes equations.
	
	The remainder of the article is structured as follows. 
	After introducing the control framework and the switched systems approach using Koopman operator-based models in Sec.~\ref{sec:Koopman}, we present the relaxation extension for control affine systems in Sec.~\ref{sec:Continuous}. 
	The incorporation into an MPC framework is discussed in Sec.~\ref{sec:MPCsolution}, and numerical results are then presented in Sec.~\ref{sec:MPC}.
	
	\section{Koopman operator-based optimal control}
	\label{sec:Koopman}
	
	The overall goal we pursue is to efficiently solve nonlinear optimal
        control problems (OCPs) such as the following:
	\begin{equation}\label{eq:MPC}
		\begin{aligned}
			\min_{\bu \in L^2([t_0,t_e], \Ucal)} &\int_{t_0}^{t_e} L(\bx(t),\bu(t),t) \ dt \\
			\mbox{s.t.} \quad \bxdot(t) &= \bH(\bx(t), \bu(t)), \\ 
			\bx(t_0) &= \bx_0,
		\end{aligned}
	\end{equation}
	where $\bx(t) \in \Xcal \subseteq \R^n$ is the state and $\bu(t) \in \Ucal$ is the control. The system dynamics are defined by $\bH \colon \Xcal \times \Ucal \rightarrow \Xcal$.
	The system dynamics are formulated as an ordinary differential equation, but partial differential equations can be considered analogously.
	
	For real systems, it is often insufficient to determine open-loop control inputs. 
	A remedy to this issue is MPC \cite{GP17}, where the OCP is solved repeatedly in real-time over a \emph{prediction horizon} of finite length $t_p=t_e-t_0$, cf., e.g., \cite{GP17}. 
	A (small) part $\Delta t \leq t_p$ of the solution of \eqref{eq:MPC} is then applied to the real system while the optimization is repeated with the prediction horizon moving forward by $t_c$.
	
	The main challenge in MPC is that Problem~\eqref{eq:MPC} has to be solved within the time $\Delta t$ which can be very challenging for high-dimensional nonlinear systems, and additional measures have to be taken in order to achieve real-time applicability. 
	One such approach is via the Koopman operator, which will be described next.
	
	\subsection{Koopman operator and EDMD}
	\label{subsec:Koopman_operator_and_EDMD}
	For the moment, assume that $\bu = \bzero$; i.e., we consider the autonomous system $ \bxdot = \bH_{\bzero}(\bx) = \bH(\bx, \bzero) $. 
	By $\psi \colon \Xcal \rightarrow \R$ we define a real-valued observable of the system.
	Then, the so-called \emph{Koopman semigroup} of operators $ \{ \Kcal^{\Delta t} \}_{\Delta t \geq 0} $ is defined as
	\begin{equation*}
		(\Kcal^{\Delta t} \psi)(\bx) = \psi(\bPhi^{\Delta t}(\bx)),
	\end{equation*}
	see \cite{LaMa94, BMM12, KKS16}. Here,
	\[
		\bx(t+\Delta t) = \bPhi^{\Delta t}(\bx(t))
	\]
	is the flow of the dynamical system.
	The Koopman operator $\Kcal^{\Delta t}$ is linear but usually acts on an infinite-dimensional space of observables.
	
	The most popular approach to numerically approximate the Koopman operator is \emph{Dynamic Mode Decomposition (DMD)} \cite{RMB+09,Sch10}, which was later extended to nonlinear observables in the \emph{Extended Dynamic Mode Decomposition (EDMD)} \cite{WKR15,KKS16,KNP+19}. 
	It is a modal decomposition method for large data sets such as fluid flow simulations. While being similar to Proper Orthogonal Decomposition, the main difference is that instead of obtaining a basis with minimal $L^2$ projection error, each of the DMD modes possesses a frequency with which it oscillates, determined by the corresponding complex eigenvalue \cite{RMB+09}.
	
	EDMD constructs an approximation of the Koopman operator from data (i.e., measurements) given by $ \bz = \boldsymbol{\psi}(\bx) \in \R^{n_o}$. 
	For finite-dimensional systems, it is also possible to observe the entire (discretized) state (\emph{full state observable}). In order to obtain a larger \emph{feature space}, these observations are often \emph{lifted} using, for instance, monomials, Hermite polynomials or radial basis functions.
	For a given set of observables $ \{ \psi_{1},\,\psi_{2},\,\dots,\,\psi_{n_o} \} $, we define $ \boldsymbol{\psi} \colon \mathcal{X} \to \R^{n_o} $ by
	\begin{equation*}
		\boldsymbol{\psi}(\bx) =
		\begin{bmatrix}
		\psi_{1}(\bx) & \psi_{2}(\bx) & \dots & \psi_{n_o}(\bx)
		\end{bmatrix}^{\top}.
	\end{equation*}
	Note that for $ \boldsymbol{\psi}(\bx) = \bx $, we recover the standard DMD. 
	The available measurement or simulation data (either obtained from one long or multiple short time series) can be used to compute the values of the observables at pairs of states $\mathbf{x}_i$ and $\widetilde{\bx}_i = \bPhi^{\Delta t}(\bx_i)$ separated in time by $\Delta t$.
	Arranging these data in matrices
	\begin{equation*}
		\begin{aligned}
			\bPsi_{\bX} &=
			\begin{bmatrix} \boldsymbol{\psi}(\bx_{1}) & \boldsymbol{\psi}(\bx_{2}) & \dots & \boldsymbol{\psi}(\bx_{m}) \end{bmatrix}
			~ \text{and} ~
			\bPsi_{\widetilde{\bX}} &=
			\begin{bmatrix} \boldsymbol{\psi}(\widetilde{\bx}_{1}) & \boldsymbol{\psi}(\widetilde{\bx}_{2}) & \dots & \boldsymbol{\psi}(\widetilde{\bx}_{m}) \end{bmatrix}
		\end{aligned}
	\end{equation*}
	allows one to compute a matrix approximation $ \bK^{\Delta t} \in \R^{k \times k} $ of the Koopman operator $\mathcal{K}^{\Delta t}$ in the span of the chosen observables given by
	\begin{equation*}
		\bK^{\Delta t} = \bPsi_{\widetilde{\bX}} \bPsi_{\bX}^+ = \big( \bPsi_{\widetilde{\bX}} \bPsi_{\bX}^{\top} \big) \big(\bPsi_{\bX} \bPsi_{\bX}^{\top}\big)^+,
    \end{equation*}
    where $()^+$ denotes the Moore-Penrose pseudoinverse of a matrix.
	
	\subsection{Switched system control}
	\label{subsec:Koopman_MPC}
	As mentioned at the beginning of this Section, we want to solve Problem~\eqref{eq:MPC} in real-time.
	According to the approach in \cite{PK19}, we realize this by
	taking two steps:
	\begin{enumerate}[i)]
		\item replace the dynamical control system in \eqref{eq:MPC} by a finite number of autonomous systems
			\[\bH_{\bubar}(\bx) = \bH(\bx,\bubar)\]
			with constant input $\bubar \in \hat{\Ucal} = \{\bubar_1,\ldots,\bubar_{n_c}\}$;
		\item construct linear systems for low-dimensional observations of the $\bH_{\bubar_j}$ using the Koopman operator.
	\end{enumerate}
	In step i), Problem \eqref{eq:MPC} is transformed into a switching problem, where the objective is to select the optimal right hand side in each time step:
	\begin{equation}\label{eq:MPC_STO}
		\begin{aligned}
			\min_{\bu:\ \bu(t)\in\hat{\Ucal}} \int_{t_0}^{t_e} & L(\bx(t),\bu(t), t) \ dt \\
			\mbox{s.t.} \quad \bxdot(t) &= \bH_{\bu(t)}(\bx(t)), \\
			\bx(t_0) &= \bx_0,
		\end{aligned}
	\end{equation}
	which differs from~\eqref{eq:MPC} only in that $\Ucal$ is replaced by $\hat{\Ucal}$, and the dynamics is now governed by~$\bH_{\bu(t)}$.
	In other words, $\bu(t)$ describes which system $\bH_{\bubar_j}$, $j\in\{1, \ldots, n_c\}$ to apply at time $t$. 
	Problem~\eqref{eq:MPC_STO} is a mixed integer optimal control problem (MIOCP) which can be solved using relaxation and rounding techniques; see, e.g., \cite{SBR09}.
	
	Solving either \eqref{eq:MPC} or \eqref{eq:MPC_STO} numerically can quickly become very expensive for large scale systems such that real-time applicability is infeasible. 
	Furthermore, there are many systems where the dynamics is not explicitly known. 
	In both situations, we can use observations (i.e., data) to approximate the Koopman operator and derive a linear system describing the dynamics of these observations. 
	These observations could consist of (part of) the system state as well as arbitrary functions of the state such as the lift coefficient of an object within a flow field.
	
	Following step ii), we would like to use a \emph{Koopman operator-based reduced order model (K-ROM)} to overcome the issue of large computational cost. 
	To this end, we compute $n_c$ Koopman operators for the $n_c$ different autonomous systems:
	\begin{equation*}
	(\Kcal^{\Delta t}_{\bubar_j} \psi)(\bx) = \psi(\bPhi^{\Delta t}_{\bubar_j}(\bx)),\quad j = 1,\ldots,n_c,
	\end{equation*}
	where $\Phi^{\Delta t}_{\bubar_j}$ is now the family of flow maps corresponding to the fixed control inputs $\bubar_j$.
	Using EDMD, we can compute an approximation of the individual Koopman
        operators from observations of the respective systems, and introducing
        the reduced state $\bz = \boldsymbol{\psi}(\bx)$, we obtain the
        following linear systems:
	\begin{equation}\label{eq:DiscreteKoopmanDynamics}
	\begin{aligned}
	\bz_{i+1} &= \bK^{\Delta t}_{\bubar_j} \bz_i,\quad j = 1,\ldots,n_c \\
	\bz_0 &= \boldsymbol{\psi}(\bx_0).
	\end{aligned}
	\end{equation} 
	If a zero-order hold is placed on the input over $\Delta t$, these linear dynamics now replace the original dynamics in \eqref{eq:MPC_STO}, yielding the following K-ROM-based OCP over the prediction horizon of length $\ell\Delta t$:
	\begin{equation} \label{eq:MPC_STO_Koopman}
		\begin{aligned}
			\min_{\bu_0,\ldots,\bu_{\ell-1} \in \hat{\Ucal}} \sum_{i=0}^{\ell-1} &\hat{L}_{i+1}(\bz_{i+1},\bu_{i+1}) \\
			\mbox{s.t.}\quad \bz_{i+1} &= \bK^{\Delta t}_{\bu_{i}} \bz_{i} \quad \text{for}~i = 0,\ldots,\ell-1, \\
			\bz_0 &= \boldsymbol{\psi}(\bx_0). 
		\end{aligned}
	\end{equation}
	Note that we have introduced a discrete-time formulation of problem \eqref{eq:MPC_STO} as the Koopman operator yields a discrete-time system. 
	Furthermore, $\hat{L}_i$ is an expression of the objective function value in terms of the measurements $\bz$ at time $i\Delta t$.
	
	The key advantage over other approaches where one operator is computed for an augmented state $\hat{\bx} = (\bx,\bu)$ (see, e.g., \cite{PBK15,PBK18,KM18b}) is that the individual models \eqref{eq:DiscreteKoopmanDynamics} can be approximated with very low data requirements. 
	It is often sufficient to use fewer than 100 data points for each system \cite{PK20}. 
	
	\section{Continuous control inputs}
	\label{sec:Continuous}
	
	Using the switched systems approach presented in Section~\ref{subsec:Koopman_MPC}, PDE-constrained control problems can be solved several orders of magnitude faster when replacing the PDE constraint by the K-ROM \cite{PK19}.
	However, several drawbacks occur:
	\begin{enumerate}[i)]
		\item the optimization problem is of combinatorial nature, and is thereby harder to solve;
		\item the control input is limited to a small number of values: i.e., to the finite set $\hat{\Ucal} \subset \Ucal$;
		\item the discrete-time setting additionally limits the control flexibility; whereas large lag times are often beneficial for the numerical approximation of the Koopman operator, they reduce the control frequency.
	\end{enumerate}
	In order to overcome these drawbacks, we shall instead build our K-ROMs from approximations of the generators of the Koopman semigroups under piecewise constant input.
	For control-affine systems, we shall find that the Koopman generators also have an affine property with respect to the input, enabling continuous interpolation between models at different input levels.
	
	\subsection{Koopman generator-based models for control}
	Consider the case of control-affine dynamics $\bH$ in \eqref{eq:MPC}:
	\begin{equation*}
	\bxdot = \bH(\bx,\bu) = \bF(\bx) + \bG(\bx)\bu.
	\end{equation*}
	If a constant input level $\bu(t) \equiv \bubar\in\Ucal$ is supplied, then the system induces a family of flows $\bx(t+\Delta t) = \bPhi_{\bubar}^{\Delta t}(\bx(t))$, on the state space parameterized by the input level $\bubar$.
	Supposing that we can find a function space $\Vcal$ so that $\psi\circ
        \bPhi_{\bubar}^{\Delta t}\in\Vcal$ for every element $\psi\in\Vcal$,
        time interval $\Delta t\ge 0$, and input level $\bubar$, then a family of Koopman operators
	\begin{equation*}
	\Kcal_{\bubar}^{\Delta t} \psi \triangleq \psi\circ \bPhi_{\bubar}^{\Delta t}
	\end{equation*}
	can be defined on $\Vcal$.
	
	Observe that at each fixed input level $\bubar$, the family of Koopman operators
	$\lbrace \Kcal_{\bubar}^{\Delta t} \rbrace_{\Delta t \ge 0}$ is a semi-group generated by the operator $\Kcal_{\bubar}:\Vcal\to\Vcal$ defined according to
	\begin{equation*}
	\Kcal_{\bubar} \psi \triangleq \lim_{\Delta t\to 0^+} \frac{\psi\circ \bPhi_{\bubar}^{\Delta t} - \psi}{\Delta t},
	\end{equation*}
	if the limit exists.
	Recall the well-known \cite{van2014probability, liggett2010continuous, Kolmogoroff1931} relationship between the Koopman generator and Koopman operator stated in the following theorem (\ref{thm:KoopmanGenerator}), originally due to Kolmogorov in the stochastic setting.
	Intuitively, the Koopman generator applied to an observable $\psi$ returns the new observable $\dot{\psi}_{\mathbf{\bar{u}}}$ which is the time derivative of $\psi$ given by the dynamics.
	The finite time evolution of an observable is then recovered by integrating these time derivatives to produce a trajectory in the space of observables.
	The corresponding flow on this space is described by the exponential map of the Koopman generator.
	\begin{theorem}[Koopman-Kolmogorov]
		\label{thm:KoopmanGenerator}
		Given a constant input $\bu(t) \equiv \bubar$, the system is autonomous.
		In this setting, the family of Koopman operators parameterized by time is related to the Koopman generator by
		\begin{equation}
		\frac{d}{d t} \Kcal_{\bubar}^t = \Kcal_{\bubar}^t \Kcal_{\bubar},\quad
		\forall t\in [0, \epsilon), \quad
		\Kcal_{\bubar}^0 = \mathcal{I},
		\label{eqn:KoopmanOperatorODE}
		\end{equation}
		yielding the identity
		\begin{equation}
		\Kcal_{\bubar}^t = \exp (\Kcal_{\bubar} t) \triangleq \sum_{p=0}^{\infty} \frac{t^p}{p!} (\Kcal_{\bubar})^p,
		\label{eqn:KoopmanExponential}
		\end{equation}
		where $(\Kcal_{\bubar})^p$ denotes repeated application of $\Kcal_{\bubar}$ and $(\Kcal_{\bubar})^0 = \mathcal{I}$ is the identity operator.
	\end{theorem}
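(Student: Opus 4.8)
The plan is to reduce the statement to the standard fact that a one-parameter operator semigroup solves a linear operator-valued initial value problem whose solution is the exponential of its generator; the only ingredient special to the Koopman setting is that the semigroup property is inherited for free from the composition law of the flow. First I would record this semigroup identity: since $\bu(t)\equiv\bubar$ makes the dynamics $\bxdot=\bH(\bx,\bubar)$ autonomous, its flow obeys $\bPhi_{\bubar}^{t+s}=\bPhi_{\bubar}^{t}\circ\bPhi_{\bubar}^{s}$ for all $t,s\ge 0$, so that for every $\psi\in\Vcal$,
\[
\Kcal_{\bubar}^{t+s}\psi=\psi\circ\bPhi_{\bubar}^{t+s}=\bigl(\psi\circ\bPhi_{\bubar}^{t}\bigr)\circ\bPhi_{\bubar}^{s}=\Kcal_{\bubar}^{s}\,\Kcal_{\bubar}^{t}\psi,
\]
hence $\Kcal_{\bubar}^{t+s}=\Kcal_{\bubar}^{s}\Kcal_{\bubar}^{t}=\Kcal_{\bubar}^{t}\Kcal_{\bubar}^{s}$ and, taking $t=s=0$, $\Kcal_{\bubar}^{0}=\mathcal{I}$.

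Second, I would differentiate in $t$. For $h>0$ the semigroup law gives
\[
\frac{\Kcal_{\bubar}^{t+h}-\Kcal_{\bubar}^{t}}{h}=\Kcal_{\bubar}^{t}\,\frac{\Kcal_{\bubar}^{h}-\mathcal{I}}{h},
\]
and letting $h\to 0^+$ the right factor converges, by the definition of the generator, to $\Kcal_{\bubar}$ on the domain where this limit exists, yielding the forward derivative $\Kcal_{\bubar}^{t}\Kcal_{\bubar}$. The backward difference quotient is treated identically, writing $\Kcal_{\bubar}^{t}=\Kcal_{\bubar}^{t-h}\Kcal_{\bubar}^{h}$ and using strong continuity of $t\mapsto\Kcal_{\bubar}^{t}$ so that $\Kcal_{\bubar}^{t-h}\to\Kcal_{\bubar}^{t}$; thus the two-sided derivative exists and equals $\Kcal_{\bubar}^{t}\Kcal_{\bubar}$, which is \eqref{eqn:KoopmanOperatorODE}.

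Third, for the exponential representation \eqref{eqn:KoopmanExponential} I would verify that $t\mapsto\exp(t\Kcal_{\bubar})$ defined by the series solves the \emph{same} problem: differentiating the series term by term gives $\frac{d}{dt}\exp(t\Kcal_{\bubar})=\Kcal_{\bubar}\exp(t\Kcal_{\bubar})=\exp(t\Kcal_{\bubar})\Kcal_{\bubar}$, and $\exp(0\cdot\Kcal_{\bubar})=\mathcal{I}$. Uniqueness of solutions of the linear operator ODE \eqref{eqn:KoopmanOperatorODE} then forces $\Kcal_{\bubar}^{t}=\exp(t\Kcal_{\bubar})$ for all $t\in[0,\epsilon)$.

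The hard part is not the algebra but the functional-analytic bookkeeping: in general $\Kcal_{\bubar}$ is unbounded, only densely defined and closed, so both the limit defining $\frac{d}{dt}\Kcal_{\bubar}^{t}$ and the term-by-term differentiation of the exponential series must be justified on an appropriate domain and in an appropriate topology --- this is precisely the Hille--Yosida / Lumer--Phillips theory of strongly continuous semigroups invoked in the cited references. If one instead works, as in the applications that follow, on a finite-dimensional $\Kcal_{\bubar}$-invariant subspace $\Vcal$ (for instance a span of Koopman eigenfunctions), then $\Kcal_{\bubar}$ is a matrix, every series converges in norm, and the three steps above become elementary with no further care required.
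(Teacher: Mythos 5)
Your proposal is correct and follows essentially the same route as the paper's proof: both derive the ODE \eqref{eqn:KoopmanOperatorODE} from the autonomous composition law $\bPhi_{\bubar}^{t+\Delta t}=\bPhi_{\bubar}^{\Delta t}\circ\bPhi_{\bubar}^{t}$ together with the definition of the generator (you do this at the operator/semigroup level, the paper does the identical computation pointwise on an arbitrary $\psi$ and $\bx_0$), and both then identify $\Kcal_{\bubar}^t$ with the exponential series via the same initial value problem. The only cosmetic difference is that you invoke uniqueness of the solution explicitly (and flag the unbounded-generator caveats), whereas the paper verifies by direct substitution that the series satisfies \eqref{eqn:KoopmanOperatorODE} and leaves uniqueness implicit.
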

	\begin{proof}
		See appendix \ref{app:KoopmanKolmogorovProof} for a simple and illustrative proof in the deterministic setting.
	\end{proof}
	
	If $\Vcal$ is a space of differentiable functions, then the action of the Koopman generator is described by the differential operator
	\begin{equation}\label{eq:GeneratorPDE}
	\Kcal_{\bubar} \psi
	= \bF \cdot \bnabla_{\bx}\psi + (\bG \bubar)\cdot\bnabla_{\bx}\psi
	\end{equation}
	(see, e.g., \cite{KKB17,KNP+19}).
	Defining $\mathcal{B}_{\bubar} \triangleq \Kcal_{\bubar} -
        \Kcal_{\bzero}$, we obtain
	\begin{equation*}
	\mathcal{B}_{\bubar}\psi = (\bG\bubar)\cdot\bnabla_{\bx}\psi.
	\end{equation*}
	Remarkably, these operators $\mathcal{B}_{\bubar}$ are linear with respect to the input~$\bubar$.
	Specifically, if $\bubar_1$ and $\bubar_2$ are inputs and $\alpha_1,\alpha_2\in\mathbb{R}$ then
	\begin{equation*}
	\mathcal{B}_{\alpha_1\bubar_1 + \alpha_2\bubar_2} = \alpha_1 \mathcal{B}_{\bubar_1} + \alpha_2 \mathcal{B}_{\bubar_2}.
	\end{equation*}
	Furthermore, this means that the Koopman generators are affine with
        respect to the input (cf.~also \cite{Goswami2017global}), as summarized
        in the following theorem.
	\begin{theorem}[Koopman generators are control-affine]
	\label{thm:GeneratorControlAffine}
	If $\Vcal$ is a space of differentiable functions and the dynamics are control-affine, then the Koopman generators are control-affine: that is,
	\begin{equation}
		\boxed{
			\Kcal_{\alpha_1\bubar_1 + \alpha_2\bubar_2} = \Kcal_{\bzero} + \alpha_1 \mathcal{B}_{\bubar_1} + \alpha_2 \mathcal{B}_{\bubar_2}.
		}
		\label{eqn:KoopmanGeneratorAffine}
	\end{equation}
	\end{theorem}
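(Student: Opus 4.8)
The plan is to reduce everything to the differential-operator representation \eqref{eq:GeneratorPDE} of the Koopman generator and then exploit the linearity of the map $\bubar \mapsto \bG\bubar$. First I would record that, by \eqref{eq:GeneratorPDE} applied at the input level $\bzero$, the unforced generator acts as $\Kcal_{\bzero}\psi = \bF\cdot\bnabla_{\bx}\psi$ for every differentiable $\psi\in\Vcal$. Subtracting this from \eqref{eq:GeneratorPDE} at a general input $\bubar$ gives the already-noted identity $\mathcal{B}_{\bubar}\psi = (\bG\bubar)\cdot\bnabla_{\bx}\psi$, which exhibits $\mathcal{B}_{\bubar}$ as a first-order differential operator whose coefficient vector field $\bG\bubar$ depends linearly on $\bubar$.

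Second, I would verify the linearity of $\bubar \mapsto \mathcal{B}_{\bubar}$ pointwise. Fix $\alpha_1,\alpha_2\in\R$, inputs $\bubar_1,\bubar_2$, a differentiable $\psi\in\Vcal$, and a state $\bx$. Since matrix--vector multiplication is linear, $\bG(\bx)(\alpha_1\bubar_1+\alpha_2\bubar_2) = \alpha_1\bG(\bx)\bubar_1 + \alpha_2\bG(\bx)\bubar_2$, and dotting with $\bnabla_{\bx}\psi(\bx)$ yields
\[
\mathcal{B}_{\alpha_1\bubar_1+\alpha_2\bubar_2}\psi = \alpha_1\mathcal{B}_{\bubar_1}\psi + \alpha_2\mathcal{B}_{\bubar_2}\psi .
\]
As $\psi$ was arbitrary, this is the operator identity $\mathcal{B}_{\alpha_1\bubar_1+\alpha_2\bubar_2} = \alpha_1\mathcal{B}_{\bubar_1} + \alpha_2\mathcal{B}_{\bubar_2}$. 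Finally, I would combine the two steps: applying the definition $\mathcal{B}_{\bubar} = \Kcal_{\bubar} - \Kcal_{\bzero}$ at the input $\alpha_1\bubar_1+\alpha_2\bubar_2$ gives $\Kcal_{\alpha_1\bubar_1+\alpha_2\bubar_2} = \Kcal_{\bzero} + \mathcal{B}_{\alpha_1\bubar_1+\alpha_2\bubar_2}$, and substituting the linearity identity just obtained produces \eqref{eqn:KoopmanGeneratorAffine}.

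The computation itself is routine; the only real content sits upstream, in the representation \eqref{eq:GeneratorPDE} of the generator as the transport operator $\bF\cdot\bnabla_{\bx} + (\bG\bubar)\cdot\bnabla_{\bx}$. I would therefore regard the main obstacle as justifying that formula rather than the affine algebra: one must argue that for $\psi\in\Vcal$ the limit $\lim_{\Delta t\to 0^+}(\psi\circ\bPhi_{\bubar}^{\Delta t}-\psi)/\Delta t$ exists and, by the chain rule together with $\frac{d}{dt}\bPhi_{\bubar}^{t}(\bx)\big|_{t=0} = \bF(\bx)+\bG(\bx)\bubar$, equals $(\bF+\bG\bubar)\cdot\bnabla_{\bx}\psi$. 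This in turn needs mild regularity (e.g.\ $\bF,\bG$ locally Lipschitz so that the flow exists and is differentiable in $t$, and $\psi\in C^1$) together with a choice of $\Vcal$ invariant under the relevant flows; since the excerpt takes \eqref{eq:GeneratorPDE} as given with references, nothing further is required here and the theorem follows.
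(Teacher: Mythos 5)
Your proposal is correct and takes essentially the same route as the paper: the paper likewise deduces the affine property directly from the differential-operator representation \eqref{eq:GeneratorPDE}, defining $\mathcal{B}_{\bubar} = \Kcal_{\bubar} - \Kcal_{\bzero}$ so that $\mathcal{B}_{\bubar}\psi = (\bG\bubar)\cdot\bnabla_{\bx}\psi$ and invoking linearity of $\bubar \mapsto \bG\bubar$. Your closing observation that the real content lies in justifying \eqref{eq:GeneratorPDE} also matches the paper, which takes that formula as given with citations rather than proving it.
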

	Even though the Koopman {\em generators\/} are affine, the following example shows that the finite-time Koopman {\em operators\/} do not share this affine property: that is, in general,
	\begin{equation}
		\Kcal^{\Delta t}_{\alpha_1\bubar_1 + \alpha_2\bubar_2} 
		\neq \Kcal^{\Delta t}_{\bzero} + \alpha_1 (\Kcal^{\Delta t}_{\bubar_1} -  \Kcal^{\Delta t}_{\bzero}) + \alpha_2 (\Kcal^{\Delta t}_{\bubar_2} -  \Kcal^{\Delta t}_{\bzero}).
		\label{eqn:KoopmanAffine}
	\end{equation}
	
	\begin{example}[Affine Koopman counter-example]
		\label{eg:KoopmanCounterExample}
		Consider the control-affine dynamical system on the unit circle
		\begin{equation*}
		\dot{x} = u, \qquad x\in[0,2\pi],
		\end{equation*}
		whose constant-input flow map is given by $\Phi_{\bar{u}}^{\Delta t}(x) = x + u\Delta t \mod 2\pi$.
		The Koopman operator can then be defined over $\Vcal = L^2([0,2\pi])$.
		Furthermore, $\lbrace \psi_{k}: x\mapsto e^{\imath k x} \rbrace_{k\in\mathbb{Z}}$ is an orthogonal basis of eigenfunctions, since
		\begin{equation*}
			(\Kcal_{u}^{\Delta t}\psi_k)(x) = e^{\imath k (x + u\Delta t)} = e^{\imath k u\Delta t}\psi_k(x),
		\end{equation*}
		so the eigenvalues are $\lambda_k = e^{\imath k u\Delta t}$.
		Now let us check whether the input-affine property holds by applying the operators on both sides of Eq.~\eqref{eqn:KoopmanAffine} to $\psi_k$, giving
		\begin{equation*}
			\Kcal^{\Delta t}_{\alpha_1\ubar_1 + \alpha_2\ubar_2} \psi_k = e^{\imath k (\alpha_1\ubar_1 + \alpha_2\ubar_2)\Delta t} \psi_k,
		\end{equation*}
		and
		\begin{equation*}
			\Kcal^{\Delta t}_{0}\psi_k + \alpha_1 (\Kcal^{\Delta t}_{\ubar_1} -  \Kcal^{\Delta t}_{0})\psi_k + \alpha_2 (\Kcal^{\Delta t}_{\ubar_2} -  \Kcal^{\Delta t}_{0})\psi_k =
			\psi_k + \alpha_1 \left(e^{\imath k u_1 \Delta t} - 1 \right)\psi_k + \alpha_2 \left( e^{\imath k u_2 \Delta t} - 1 \right)\psi_k.
		\end{equation*}
		If the two operators act the same way on $\psi_k$, we must have
		\begin{equation*}
			e^{\imath k (\alpha_1\ubar_1 + \alpha_2\ubar_2)\Delta t} =
			1 + \alpha_1 \left(e^{\imath k u_1 \Delta t} - 1 \right) +
            \alpha_2 \left( e^{\imath k u_2 \Delta t} - 1
            \right),\qquad\forall \alpha_1,\alpha_2.
		\end{equation*}
		But this clearly does not hold for all $\alpha_1,\alpha_2$, as 
        one can verify by expanding the Taylor series for the exponentials.
		Interestingly, the above equation does hold to first order in $\Delta t$, as one might expect from the input-affine property \eqref{eqn:KoopmanGeneratorAffine} of the Koopman generators. 
		Furthermore, it is exact for linear systems with affine observables, as we shall see in Section~\ref{subsec:PracticalApproximation}.
    \end{example}
	
	The input-affine property of the Koopman generators expressed in Eq. \eqref{eqn:KoopmanGeneratorAffine} can be used to create a bilinear model for the time evolution of observations $\psi(\bx)$ in terms of a finite number, $n_c$, of Koopman generators.
	Given any linear combination of constant inputs to the system,
	\begin{equation*}
		\mathbf{u}(t) \equiv \mathbf{\bar{u}} = \sum_{i=1}^{n_c} \alpha_i\mathbf{\bar{u}}_i,
	\end{equation*}
	the dynamics of an observable are given by
	\begin{equation}\label{eqn:BilinearGeneratorModel}
		\dot{\psi}_{\mathbf{\bar{u}}}(\bx) = (\mathcal{K}_{\mathbf{\bar{u}}}\psi)(\bx) =  \left[\left(\Kcal_{\bzero} + \sum_{i=1}^{n_c}\alpha_i\mathcal{B}_{\bubar_i}\right)\psi\right](\bx).
	\end{equation}
	
	\begin{remark}[Time-Varying Input]
		\label{rem:TimeVaryingInput}
		The assumption that the input is constant may actually be relaxed.
		Suppose $\mathbf{x}(t)$ is a trajectory of the system under a time-varying input $\mathbf{u}(t)$ and that $\psi$ is a differentiable observable. 
		Then the time-derivative of the observable is still expressible as
		\begin{equation*}
			\frac{d}{dt}\psi(\mathbf{x}(t)) = \mathbf{H}(\mathbf{x}(t), \mathbf{u}(t))\cdot\boldsymbol{\nabla}_{\mathbf{x}}\psi(\mathbf{x}(t)) = (\mathcal{K}_{\mathbf{u}(t)}\psi)(\mathbf{x}(t)),
		\end{equation*}
		where $\mathcal{K}_{\mathbf{u}(t)}$ is the Koopman generator associated with constant input at the value $\mathbf{u}(t)$ at time $t$.
		Hence, $\mathcal{K}_{\mathbf{u}(t)}$ varies in time with $\mathbf{u}(t)$.
		The control-affine property allows us to pass the time-variation of the input onto the coefficients in an affine combination of fixed generators:
		expanding $\mathbf{u}(t) = \sum_{i=1}^{n_c} u_i(t)\mathbf{e}_i$ in the canonical basis, we obtain
		\begin{equation*}
			\mathcal{K}_{\mathbf{u}(t)} = \mathcal{K}_{\mathbf{0}} + \sum_{i=1}^{n_c}u_i(t)\mathcal{B}_{\mathbf{e}_i}.
		\end{equation*}
	\end{remark}
	
	\subsection{EDMD for generator-based modeling}
	\label{subsec:EDMDforGenerators}
	A numerical approximation of Eq.~\eqref{eqn:BilinearGeneratorModel} can be used to create reduced-order models for the time evolution of measurements $\bz$; cf.~\cite{KNP+19} for details.
	Similarly to the nonlinearly interpolated EDMD method of \cite{Williams2016extending}, we shall construct an approximation of the generator $\mathcal{K}_{\mathbf{u}}$ over a finite-dimensional subspace of observables spanned by the elements in a dictionary $\boldsymbol{\psi}$.
	The difference here is that we are approximating the {\em generator} and therefore need only consider {\em affine} interpolations, resulting in training data requirements no higher than those involved in approximating $n_c+1$ Koopman operators at fixed input levels separately.
	Recall that the Koopman operators for each autonomous system at fixed input levels were approximated separately in the switched optimal control formulation reviewed in section \ref{sec:Koopman} above.
	
	Without loss of generality, let $\mathbf{\bar{u}}_i = \mathbf{e}_i$, $i=1,\ldots,n_c$ be the canonical basis for the space of inputs $\mathcal{U}\subseteq\mathbb{R}^{n_c}$.
	This allows us to express Eq.~\eqref{eqn:BilinearGeneratorModel} directly in terms of the input components
	\begin{equation}\label{eqn:BilinearGeneratorModel_input}
		\dot{\psi}_{\mathbf{u}}(\bx) =  \left(\Kcal_{\bzero}\psi\right)(\bx) + \sum_{i=1}^{n_c}u_i\left(\mathcal{B}_{\mathbf{e}_i}\psi\right)(\bx).
	\end{equation}
	A finite-dimensional approximation of Eq.~\eqref{eqn:BilinearGeneratorModel_input} expresses the dynamics of each element in the dictionary $\boldsymbol{\psi}$ as a linear combination of $\boldsymbol{\psi}$'s elements,
	\begin{equation}\label{eqn:BilinearGeneratorModel_finite}
		\boldsymbol{\dot{\psi}}_{\mathbf{u}}(\bx) =  \mathbf{K}_{\mathbf{0}}\boldsymbol{\psi}(\bx) + \sum_{i=1}^{n_c}u_i\mathbf{B}_{\mathbf{e}_i}\boldsymbol{\psi}(\bx) + \mathbf{e}(\mathbf{x},\mathbf{u}),
	\end{equation}
	with some error $\mathbf{e}(\mathbf{x},\mathbf{u})$.
	
	In order to find the matrices $\mathbf{K}_{\mathbf{0}}$, $\mathbf{B}_{\mathbf{e}_1}$, $\ldots$, $\mathbf{B}_{\mathbf{e}_{n_c}}$, we shall minimize the errors $\mathbf{e}(\mathbf{x},\mathbf{u})$ in this finite-dimensional approximation over a training data set consisting of state-input-derivative tuples
	\begin{equation*}
	\left\lbrace \left(\mathbf{x}_j,\ \mathbf{u}_j,\ \mathbf{\dot{x}}_j = \mathbf{H}(\mathbf{x}_j, \mathbf{u}_j) \right) \right\rbrace_{j=1}^m.
	\end{equation*}
	Letting $\mathbf{B} = \begin{bmatrix} \mathbf{B}_{\mathbf{e}_1} & \cdots
          & \mathbf{B}_{\mathbf{e}_{n_c}} \end{bmatrix}$, we may express the error in Eq.~\eqref{eqn:BilinearGeneratorModel_finite} compactly as
	\begin{equation*}
	\mathbf{e}(\mathbf{x},\mathbf{u}) = \boldsymbol{\dot{\psi}}_{\mathbf{u}}(\bx) - 
	\begin{bmatrix} \mathbf{K}_{\mathbf{0}} & \mathbf{B} \end{bmatrix}
	\begin{bmatrix} \boldsymbol{\psi}(\bx)\\ \mathbf{u}\otimes\boldsymbol{\psi}(\bx) \end{bmatrix},
	\end{equation*}
	where $\otimes$ is the Kronecker product.
	It follows that minimizing the empirical error
	\begin{equation}\label{eqn:EDMD_generatorControlError}
	E(\mathbf{K}_{\mathbf{0}}, \mathbf{B}_{\mathbf{e}_1}, \ldots, \mathbf{B}_{\mathbf{e}_{n_c}}) = \sum_{j=1}^m \Vert \mathbf{e}(\mathbf{x}_j,\mathbf{u}_j) \Vert_2^2
	\end{equation}
	is a least-squares problem whose solution can be found using the training data.
	
	The time derivatives can be found according to the chain rule by differentiating the dictionary elements
	\begin{equation}
	\boldsymbol{\dot{\psi}}_j \triangleq \boldsymbol{\dot{\psi}}_{\mathbf{u}_j}(\bx_j) = \mathcal{D}_{\mathbf{x}}\boldsymbol{\psi}(\mathbf{x}_j)\mathbf{\dot{x}}_j,
	\label{eqn:ObservableDeriv_ChainRuleApprox}
	\end{equation}
	or by smoothed finite-differences using neighboring points along trajectories of the form
	\begin{equation}
	\boldsymbol{\dot{\psi}}_j = \frac{1}{\Delta t} \sum_{k=-s}^s c_k \boldsymbol{\psi}\left(\boldsymbol{\Phi}_{\mathbf{u}_j}^{k\Delta t}(\mathbf{x}_j)\right).
	\label{eqn:ObservableDeriv_FiniteDifferenceApprox}
	\end{equation}
	We suggest using Eq.~\eqref{eqn:ObservableDeriv_ChainRuleApprox} only when the exact state derivatives can be evaluated using the governing equations.
	Any noise in the state derivatives $\mathbf{\dot{x}}_j$ might be further amplified by the Jacobian matrix $\mathcal{D}_{\mathbf{x}}\boldsymbol{\psi}(\mathbf{x}_j)$,
	whereas Eq.~\eqref{eqn:ObservableDeriv_FiniteDifferenceApprox} can work directly with measurements of the observables along (noisy) trajectories.
	Constructing the matrices
	\begin{equation*}
		\begin{aligned}
			\bPsi_{\mathbf{X},\mathbf{U}} &=
			\begin{bmatrix} 
			\boldsymbol{\psi}(\bx^{(1)}) & \cdots & \boldsymbol{\psi}(\bx^{(m)}) \\
			\mathbf{u}^{(1)}\otimes\boldsymbol{\psi}(\bx^{(1)}) & \cdots & \mathbf{u}^{(m)}\otimes\boldsymbol{\psi}(\bx^{(m)})
			\end{bmatrix}
			\quad \text{and} \\
			\bPsidot_{\bX,\mathbf{U}} &=
			\begin{bmatrix} 
			\boldsymbol{\dot{\psi}}^{(1)} & \boldsymbol{\dot{\psi}}^{(2)} & \cdots & \boldsymbol{\dot{\psi}}^{(m)}
			\end{bmatrix},
		\end{aligned}
	\end{equation*}
	we find that a solution minimizing Eq.~\eqref{eqn:EDMD_generatorControlError} is given by
	\begin{equation}\label{eq:ContinuousEDMD_Matrices}
		\begin{bmatrix} \mathbf{K}_{\mathbf{0}} & \mathbf{B}_{\mathbf{e}_1} & \cdots & \mathbf{B}_{\mathbf{e}_{n_c}} \end{bmatrix} = 
		\bPsidot_{\bX,\mathbf{U}} \left(\bPsi_{\mathbf{X},\mathbf{U}}\right)^+.
	\end{equation}
	Observe that the amount of data required to make the matrix $\bPsi_{\mathbf{X},\mathbf{U}}$ full row-rank grows linearly with the dimension of the input $n_c$.
	Thus, the amount of data required to approximate the generators is comparable to the amount of data needed to approximate the Koopman operators at different fixed input levels as in Sec.~\ref{subsec:Koopman_operator_and_EDMD}.
	The advantage of using Eq.~\eqref{eq:ContinuousEDMD_Matrices} is that the model can be learned from a data set where the inputs $\mathbf{u}_j$ take arbitrary values in $\mathcal{U}$.
	
	With the above EDMD-like approximation via Eq.~\eqref{eq:ContinuousEDMD_Matrices}, we can now construct a surrogate model from data. 
	To this end, we will keep track of the observables $\mathbf{z} = \boldsymbol{\psi}(\mathbf{x})$ and evolve them according to the bilinear control system
	\begin{equation}\label{eqn:Continuous_EDMD_ROM}
		\boxed{
			\mathbf{\dot{z}}(t) = \mathbf{K}_{\mathbf{u}(t)}\mathbf{z}(t) = \left( \mathbf{K}_{\mathbf{0}} + \sum_{i=1}^{n_c} u_i(t)\mathbf{B}_{\mathbf{e}_i} \right) \mathbf{z}(t).
		}
	\end{equation}
	If a small and informative dictionary of observables $\boldsymbol{\psi}$ can be found, then it will be much more efficient to model the dynamics of the vector of observations $\bz = \boldsymbol{\psi}(\bx)$ using \eqref{eqn:Continuous_EDMD_ROM} than to evolve the full state $\bx$ directly using the original dynamics.
	Moreover, the affine property allows us to learn and compute with the reduced model in a very efficient way.
	
	\subsection{Discrete vs.\ Continuous Approximation Methods}
	\label{subsec:PracticalApproximation}
	So far, we have exploited the input-affine property of the Koopman generators in order to construct the bilinear reduced order model given in Eq.~\eqref{eqn:Continuous_EDMD_ROM}.
	In this section we will discuss how discrete approximations of the these dynamics are connected with models based on finite-time Koopman operators.
	To make this connection, we shall place a zero-order hold on the input over intervals of length $\Delta t$.
	In this case, the continuous-time dynamics in the space of observables become linear over each interval, where it can be understood using Theorem \ref{thm:KoopmanGenerator} in terms of the finite-time Koopman operator.
	
	Working with models based on approximating the finite-time Koopman operator appears advantageous for two main reasons. 
	First, approximating the finite-time Koopman operator on a finite dictionary does not require differentiating noisy signals. 
	Second, the models obtained are discrete-time, requiring only a single matrix-vector product to evolve the chosen observables over an interval.
	This is in contrast to the continuous-time models obtained using generator-based approaches, which must be integrated over each interval. 
	
	On these grounds, it is worth investigating when and how approximations of the Koopman generator can be replaced by approximations of the finite-time Koopman operator for the purpose of control-oriented reduced-order modeling.
	In short, this is possible when the time interval $\Delta t$ of the zero-order hold is small or when the dynamics and observables are affine.
	Furthermore, we will show that EDMD-like modeling based on approximating finite-time Koopman operators is actually equivalent to a special case of the generator-based approach described in Section \ref{subsec:EDMDforGenerators}.
	
	Example \ref{eg:KoopmanCounterExample} suggests that while the finite-time Koopman operators are not affine with respect to the input, such a property might be true to first order in $\Delta t$.
	Indeed, examining the series expansion in Theorem \ref{thm:KoopmanGenerator} for $\mathcal{K}_{\mathbf{u}}^{\Delta t}$, one obtains
	\begin{equation}
         \label{eq:1}
         \begin{aligned}
			\mathcal{K}_{\mathbf{u}}^{\Delta t} &= \exp\left(\mathcal{K}_{\mathbf{0}}\Delta t + \sum_{i=1}^{n_c}u_i(\mathcal{K}_{\mathbf{e}_i} - \mathcal{K}_{\mathbf{0}})\Delta t\right)\\
			&= \mathcal{I} + \mathcal{K}_{\mathbf{0}}\Delta t + \sum_{i=1}^{n_c}u_i\left[(\mathcal{I} + \mathcal{K}_{\mathbf{e}_i}\Delta t) - (\mathcal{I} + \mathcal{K}_{\mathbf{0}}\Delta t)\right] + \mathcal{O}(\Delta t^2)\\
			&= e^{\mathcal{K}_{\mathbf{0}}\Delta t} + \sum_{i=1}^{n_c}u_i\left(e^{\mathcal{K}_{\mathbf{e}_i}\Delta t} - e^{\mathcal{K}_{\mathbf{0}}\Delta t}\right) + \mathcal{O}(\Delta t^2)\\
			&= \mathcal{K}_{\mathbf{0}}^{\Delta t} + \sum_{i=1}^{n_c}u_i\left(\mathcal{K}_{\mathbf{e}_i}^{\Delta t} - \mathcal{K}_{\mathbf{0}}^{\Delta t}\right) + \mathcal{O}(\Delta t^2).
		\end{aligned}
	\end{equation}
	This means that the EDMD-like method and continuous time model presented in Sec.~\ref{subsec:EDMDforGenerators} still retain their accuracy to first order in $\Delta t$ when approximations of the Koopman generator are replaced by the finite-time Koopman operator.
	
	To make this claim precise, suppose that we know the future states $\mathbf{\tilde{x}}_j = \boldsymbol{\Phi}_{\mathbf{u}_j}^{\Delta t}(\mathbf{x}_j)$ for each state-input pair $\mathbf{x}_j$, $\mathbf{u}_j$ in our training data set and construct the matrix
	\begin{equation*}
		\begin{aligned}
			\boldsymbol{\Psi}_{\mathbf{\tilde{X}},\mathbf{U}} &=
			\begin{bmatrix} 
			\boldsymbol{\psi}(\mathbf{\tilde{x}}_1) & \boldsymbol{\psi}(\mathbf{\tilde{x}}_2) & \cdots & \boldsymbol{\psi}(\mathbf{\tilde{x}}_m)
			\end{bmatrix}.
		\end{aligned}
	\end{equation*}
	Using the same argument as in Sec.~\ref{subsec:EDMDforGenerators}, we obtain EDMD-like approximations $\mathbf{K}_{\mathbf{0}}^{\Delta t}$, $\mathbf{K}_{\mathbf{e}_1}^{\Delta t}$, $\ldots$, $\mathbf{K}_{\mathbf{e}_{n_c}}^{\Delta t}$ for the finite-time Koopman operators at input levels $\mathbf{0}$, $\mathbf{e}_1$, $\ldots$, $\mathbf{e}_{n_c}$, by defining $\mathbf{B}_{\mathbf{e}_i}^{\Delta t} = \mathbf{K}_{\mathbf{e}_i}^{\Delta t} - \mathbf{K}_{\mathbf{0}}^{\Delta t}$ and computing
	\begin{equation}\label{DiscreteEDMD_Matrices}
		\begin{bmatrix} \mathbf{K}_{\mathbf{0}}^{\Delta t} & \mathbf{B}_{\mathbf{e}_1}^{\Delta t} & \cdots & \mathbf{B}_{\mathbf{e}_{n_c}}^{\Delta t} \end{bmatrix} = 
		\boldsymbol{\Psi}_{\mathbf{\tilde{X}},\mathbf{U}} \left(\bPsi_{\mathbf{X},\mathbf{U}}\right)^+.
	\end{equation}
	The resulting discrete-time model for the evolution of observables $\mathbf{z}_k = \boldsymbol{\psi}(\mathbf{x}(k\Delta t))$ under the dynamics with constant input $\mathbf{u}$ over $\Delta t$ is given by
	\begin{equation}
		\label{eqn:Discrete_EDMD_ROM}
		\boxed{
			\mathbf{z}_{k+1} = \left( \mathbf{K}_{\mathbf{0}}^{\Delta t} + \sum_{i=1}^{n_c} u_i\mathbf{B}_{\mathbf{e}_i}^{\Delta t} \right)\mathbf{z}_k +\mathcal{O}(\Delta t^2).
		}
	\end{equation}
	Note that the input-affine approximation for the finite-time Koopman operators in the above equation cannot achieve more than first-order accuracy in the general case.
	\begin{remark}
	If greater accuracy is desired, more terms in the series expansion
        employed in~(\ref{eq:1}) may be retained, to obtain an expression
        analogous to~(\ref{eqn:Discrete_EDMD_ROM}), but nonlinear in the
        components $u_i$.
	\end{remark}
	
	The following Theorem \ref{thm:AffineKoopmanAndEulerIntegration} shows that the above technique based on approximating finite-time Koopman operators is (under a mild assumption) equivalent to a particular discretization of the method in Sec.~\ref{subsec:EDMDforGenerators}.
	In particular, it is equivalent to using a first-order forward difference in Eq.~\eqref{eqn:ObservableDeriv_FiniteDifferenceApprox} and explicit Euler integration to advance the model in Eq.~\eqref{eqn:Continuous_EDMD_ROM}.
	\begin{theorem}[Discrete and Continuous Models]
		\label{thm:AffineKoopmanAndEulerIntegration}
		Assume that $\bPsi_{\mathbf{X},\mathbf{U}}$ has full row rank and that approximations of the finite-time Koopman operators and infinitessimal generators are computed using Eq.~\eqref{DiscreteEDMD_Matrices} and Eq.~\eqref{eq:ContinuousEDMD_Matrices} with the finite-difference formula
		\begin{equation}
                  \label{eq:2}
                  \boldsymbol{\dot{\psi}}_j = \frac{\boldsymbol{\psi}(\mathbf{\tilde{x}}_j) - \boldsymbol{\psi}(\mathbf{x}_j)}{\Delta t}.
		\end{equation}
		Then advancing the dynamics via the discrete-time model Eq.~\eqref{eqn:Discrete_EDMD_ROM} is identical to advancing the dynamics in the continuous-time model Eq.~\eqref{eqn:Continuous_EDMD_ROM} using explicit Euler integration over $\Delta t$.
		Furthermore, the matrix approximations obtained using the two methods are related by the identities
		\begin{equation}
		\mathbf{K}_{\mathbf{0}}^{\Delta t} = \mathbf{I}_{n_o} + \Delta t \mathbf{K}_{\mathbf{0}}, 
		\quad \mbox{and}\quad
		\mathbf{B}_{\mathbf{e}_i}^{\Delta t} = \Delta t \mathbf{B}_{\mathbf{e}_i},
		\label{eqn:DiscreteContinuousEDMD_compatibility}
		\end{equation}
		for all $i=1$, $\ldots$, $n_c$.
		Hence, they are compatible in the sense of the operator-generator relationship $\mathbf{K}_{\mathbf{u}}^{\Delta t} = \exp(\mathbf{K}_{\mathbf{u}}\Delta t)$ to first order in $\Delta t$.
	\end{theorem}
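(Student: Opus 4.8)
The plan is to notice that, under the forward-difference formula~\eqref{eq:2}, the derivative-data matrix $\bPsidot_{\bX,\mathbf{U}}$ is nothing but a rescaled difference of two snapshot matrices that also appear in the discrete-time construction; consequently the two least-squares solutions~\eqref{eq:ContinuousEDMD_Matrices} and~\eqref{DiscreteEDMD_Matrices} can only differ by an explicit constant matrix, and the remaining claims follow by reading off blocks and substituting.

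First I would set $\bPsi_{\bX} = \begin{bmatrix}\boldsymbol{\psi}(\mathbf{x}_1) & \cdots & \boldsymbol{\psi}(\mathbf{x}_m)\end{bmatrix}$, which is the top block of $n_o$ rows of $\bPsi_{\bX,\mathbf{U}}$, and $\bB = \begin{bmatrix}\bB_{\mathbf{e}_1} & \cdots & \bB_{\mathbf{e}_{n_c}}\end{bmatrix}$ with $\bB^{\Delta t}$ defined analogously. Substituting~\eqref{eq:2} column by column into the definition of $\bPsidot_{\bX,\mathbf{U}}$ gives the identity $\bPsidot_{\bX,\mathbf{U}} = \tfrac{1}{\Delta t}\bigl(\bPsi_{\widetilde{\bX},\mathbf{U}} - \bPsi_{\bX}\bigr)$. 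Right-multiplying by $\bigl(\bPsi_{\bX,\mathbf{U}}\bigr)^{+}$ and invoking~\eqref{eq:ContinuousEDMD_Matrices} and~\eqref{DiscreteEDMD_Matrices} yields
\begin{equation*}
\begin{bmatrix}\bK_{\bzero} & \bB\end{bmatrix} = \frac{1}{\Delta t}\left(\begin{bmatrix}\bK_{\bzero}^{\Delta t} & \bB^{\Delta t}\end{bmatrix} - \bPsi_{\bX}\bigl(\bPsi_{\bX,\mathbf{U}}\bigr)^{+}\right).
\end{equation*}

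The key step --- and the only place where the full-row-rank hypothesis enters --- is to evaluate $\bPsi_{\bX}\bigl(\bPsi_{\bX,\mathbf{U}}\bigr)^{+}$. Because $\bPsi_{\bX,\mathbf{U}}$ has full row rank, $\bPsi_{\bX,\mathbf{U}}\bigl(\bPsi_{\bX,\mathbf{U}}\bigr)^{+} = \mathbf{I}$, and since $\bPsi_{\bX}$ consists exactly of the top $n_o$ rows of $\bPsi_{\bX,\mathbf{U}}$, the product $\bPsi_{\bX}\bigl(\bPsi_{\bX,\mathbf{U}}\bigr)^{+}$ equals the top $n_o$ rows of that identity, i.e.\ $\begin{bmatrix}\mathbf{I}_{n_o} & \mathbf{0} & \cdots & \mathbf{0}\end{bmatrix}$. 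Comparing the blocks of the displayed equation then gives $\bK_{\bzero} = \tfrac{1}{\Delta t}(\bK_{\bzero}^{\Delta t} - \mathbf{I}_{n_o})$ and $\bB_{\mathbf{e}_i} = \tfrac{1}{\Delta t}\bB_{\mathbf{e}_i}^{\Delta t}$ for each $i$, which is precisely~\eqref{eqn:DiscreteContinuousEDMD_compatibility}.

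Finally, to obtain the equivalence of the two advancement schemes I would substitute~\eqref{eqn:DiscreteContinuousEDMD_compatibility} into the discrete update~\eqref{eqn:Discrete_EDMD_ROM}, which becomes $\mathbf{z}_{k+1} = \bigl(\mathbf{I}_{n_o} + \Delta t\,\bK_{\bzero} + \Delta t\sum_{i} u_i\bB_{\mathbf{e}_i}\bigr)\mathbf{z}_k = \mathbf{z}_k + \Delta t\,\bK_{\mathbf{u}}\mathbf{z}_k$, exactly one explicit-Euler step of~\eqref{eqn:Continuous_EDMD_ROM} with step size $\Delta t$; and first-order compatibility with $\bK_{\mathbf{u}}^{\Delta t} = \exp(\bK_{\mathbf{u}}\Delta t)$ follows from $\mathbf{I}_{n_o} + \Delta t\,\bK_{\mathbf{u}} = \exp(\bK_{\mathbf{u}}\Delta t) + \mathcal{O}(\Delta t^2)$. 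I do not expect a serious obstacle; the one point requiring care is that the block argument for $\bPsi_{\bX}\bigl(\bPsi_{\bX,\mathbf{U}}\bigr)^{+}$ implicitly uses that $\bPsi_{\bX,\mathbf{U}}$ and $\bPsi_{\widetilde{\bX},\mathbf{U}}$ are assembled from the \emph{same} state--input samples $(\mathbf{x}_j,\mathbf{u}_j)$ in the \emph{same} column order, with $\mathbf{\tilde{x}}_j = \bPhi_{\mathbf{u}_j}^{\Delta t}(\mathbf{x}_j)$.
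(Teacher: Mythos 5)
Your proposal is correct and follows essentially the same route as the paper's proof: the relation $\bPsidot_{\bX,\mathbf{U}} = \tfrac{1}{\Delta t}\bigl(\bPsi_{\mathbf{\tilde{X}},\mathbf{U}} - \bPsi_{\bX}\bigr)$ is exactly the paper's identity $\bPsi_{\mathbf{\tilde{X}},\mathbf{U}} = \begin{bmatrix}\mathbf{I}_{n_o} & \mathbf{0}\end{bmatrix}\bPsi_{\bX,\mathbf{U}} + \Delta t\,\bPsidot_{\bX,\mathbf{U}}$ rearranged, and you use the full-row-rank hypothesis in the same way (the pseudoinverse is a right inverse, so $\bPsi_{\bX}\bigl(\bPsi_{\bX,\mathbf{U}}\bigr)^{+} = \begin{bmatrix}\mathbf{I}_{n_o} & \mathbf{0}\end{bmatrix}$), then read off blocks and substitute to get the Euler-step equivalence and first-order compatibility. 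No gaps; this matches the paper's argument.
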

	\begin{proof}
		It suffices to show that the identities in Eq.~\eqref{eqn:DiscreteContinuousEDMD_compatibility} hold when the matrix approximations are computed using the two EDMD-like approximation methods described in Eq.~\eqref{eq:ContinuousEDMD_Matrices} and Eq.~\eqref{DiscreteEDMD_Matrices}.
		If these identities hold then it is clear that Eq.~\eqref{eqn:Discrete_EDMD_ROM} is identical to an explicit Euler scheme
		\begin{equation*}
		\mathbf{z}(t+\Delta t) = \mathbf{z}(t) + 
		\Delta t \left( \mathbf{K}_{\mathbf{0}} + \sum_{i=1}^{n_c} u_i\mathbf{B}_{\mathbf{e}_i} \right) \mathbf{z}(t)
		\end{equation*}
		for integrating Eq.~\eqref{eqn:Continuous_EDMD_ROM}.
		Compatibility is also clear from the identities together with series expansions of the matrix exponentials.
		Using the finite-difference formula~(\ref{eq:2}), we observe that
		\begin{equation*}
		\boldsymbol{\Psi}_{\mathbf{\tilde{X}},\mathbf{U}} =
		\begin{bmatrix}
		\mathbf{I}_{n_o} & \mathbf{0}_{n_o\times n_c n_o} 
		\end{bmatrix}
		\boldsymbol{\Psi}_{\mathbf{X},\mathbf{U}} 
		+ \Delta t\boldsymbol{\dot{\Psi}}_{\mathbf{X},\mathbf{U}}.
		\end{equation*}
		Since $\boldsymbol{\Psi}_{\mathbf{X},\mathbf{U}}$ was assumed to have full row rank, $(\boldsymbol{\Psi}_{\mathbf{X},\mathbf{U}})^+$ is a right inverse. 
		Multiplying on the right by $(\boldsymbol{\Psi}_{\mathbf{X},\mathbf{U}})^+$, we obtain
		\begin{equation*}
		\boldsymbol{\Psi}_{\mathbf{\tilde{X}},\mathbf{U}}(\boldsymbol{\Psi}_{\mathbf{X},\mathbf{U}})^+ = 
		\begin{bmatrix}
		\mathbf{I}_{n_o} & \mathbf{0}_{n_o\times n_c n_o} 
		\end{bmatrix}
		+ \Delta t\boldsymbol{\dot{\Psi}}_{\mathbf{X},\mathbf{U}}(\boldsymbol{\Psi}_{\mathbf{X},\mathbf{U}})^+.
		\end{equation*}
		Substituting Eq.~\eqref{eq:ContinuousEDMD_Matrices} and Eq.~\eqref{DiscreteEDMD_Matrices} into the above equation, we obtain the desired identities.
	\end{proof}
	
	As we have seen, for general control-affine systems, bilinear models based on approximate finite-time Koopman operators cannot be justifiably used unless the time step is sufficiently small.
	However, for linear systems, the finite-time Koopman operators are control-affine over the invariant subspace of affine observables.
	In other words, when the underlying dynamics are linear, then we can justifiably make control-affine approximations of the finite-time Koopman operators when the observables $\boldsymbol{\psi}$ are affine.
	This fact may seem trivial, but modeling the system this way could achieve significant dimensionality reduction if an approximately invariant subspace of affine observables can be found.
	
	To illustrate this idea, consider the linear time-invariant system
	\begin{equation*}
	\mathbf{\dot{x}} = \mathbf{A}\mathbf{x} + \mathbf{B}\mathbf{u},
	\end{equation*}
	whose constant-input flow map is given by the well-known formula
	\begin{equation*}
	\boldsymbol{\Phi}_{\mathbf{\bar{u}}}^{\Delta t}(\mathbf{x}_0) = e^{\mathbf{A} \Delta t}\mathbf{x}_0 + \left(\int_{0}^{\Delta t}e^{\mathbf{A} (\Delta t - \tau)}\mathbf{B}\ d\tau \right) \mathbf{\bar{u}}.
	\end{equation*}
	Clearly, the flow map is affine with respect to the input, since
	\begin{align*}
		\boldsymbol{\Phi}_{\alpha_1\mathbf{\bar{u}}_1 + \alpha_2\mathbf{\bar{u}}_2}^{\Delta t} = \boldsymbol{\Phi}_{\mathbf{0}}^{\Delta t} + 
		\alpha_1\left[\boldsymbol{\Phi}_{\mathbf{\bar{u}}_1}^{\Delta t} - \boldsymbol{\Phi}_{\mathbf{0}}^{\Delta t}\right] + \alpha_2\left[\boldsymbol{\Phi}_{\mathbf{\bar{u}}_2}^{\Delta t} - \boldsymbol{\Phi}_{\mathbf{0}}^{\Delta t}\right],
	\end{align*}
	and with respect to the state, since
	\begin{align*}
		\boldsymbol{\Phi}_{\mathbf{\bar{u}}}^{\Delta t}(\alpha_1 \mathbf{x}_1 + \alpha_2 \mathbf{x}_2) = 
		\boldsymbol{\Phi}_{\mathbf{\bar{u}}}^{\Delta t}(\mathbf{0}) +
		\alpha_1 \left[\boldsymbol{\Phi}_{\mathbf{\bar{u}}}^{\Delta t}(\mathbf{x}_1) - \boldsymbol{\Phi}_{\mathbf{\bar{u}}}^{\Delta t}(\mathbf{0}) \right] + \alpha_2 \left[\boldsymbol{\Phi}_{\mathbf{\bar{u}}}^{\Delta t}(\mathbf{x}_2) - \boldsymbol{\Phi}_{\mathbf{\bar{u}}}^{\Delta t}(\mathbf{0}) \right]
	\end{align*}
	over any length of time $\Delta t$.
	It follows immediately that the space of affine observables $\mathcal{V} = \lbrace \psi:\mathbf{x}\mapsto c + \mathbf{w}^T\mathbf{x}\ :\ c\in\mathbb{R},\ \mathbf{w}\in\mathbb{R}^n \rbrace$ is invariant under composition with any constant-input flow map, i.e. $\psi \circ \boldsymbol{\Phi}_{\mathbf{\bar{u}}}^{\Delta t} \in\mathcal{V}$ for every $\psi\in\mathcal{V}$.
	Therefore, the entire family of finite-time Koopman operators $\mathcal{K}_{\mathbf{\bar{u}}}^{\Delta t}$ over constant inputs $\mathbf{\bar{u}}$ and intervals $\Delta t$ can be defined on $\mathcal{V}$.
	Over this invariant space of affine observables the finite-time Koopman operators are also input-affine, yielding the identity
	\begin{equation*}
	\mathcal{K}_{\mathbf{u}}^{\Delta t} = \mathcal{K}_{\mathbf{0}}^{\Delta t} + \sum_{i=1}^{n_c}u_i \left(\mathcal{K}_{\mathbf{e}_i}^{\Delta t} - \mathcal{K}_{\mathbf{0}}^{\Delta t} \right),
	\end{equation*}
	which holds for any $\Delta t$.
	Note also that if the finite-time Koopman operators are input affine and have $\mathcal{V}$ as an invariant subspace, then the original system must be linear.
	
	\begin{remark}
		In practice, the input-affine interpolation approach for the continuous and finite-time models show satisfactory results even for systems where the control input enters in a weakly nonlinear manner. 
		The loss in accuracy obviously depends on the degree of nonlinearity, but on short time horizons, the MPC often yields the desirable control input.
	\end{remark}
	
	\subsection{Example: forced Duffing Equation}
	\label{subsec:GenDuffing}
	
	As a first example, we consider the Duffing equation
	\begin{equation}\label{eq:Duffing}
	\bxdot(t) = \left(\begin{array}{c}
	x_2(t) \\ -\delta x_2(t) - \alpha x_1(t) - \beta x_1^3(t) + u(t)
	\end{array}\right),
	\end{equation}
	with an affine control input $u(t) \in \Ucal = [-1,1]$. 
	For the observable $\boldsymbol{\psi}$ we use all monomials in the state variables $x_1$ and $x_2$ up to degree five, which results in a 21-dimensional linear system in feature space. 
	For the data collection, we randomly select 100 initial conditions from the interval $[-3,3]^2\subset \R^2$ and evaluate the right-hand side at these points for both $u(t) = \bar{u}_1 = -1$ and $u(t)=\bar{u}_2 = 1$.
	We then test both formulations for constructing the K-ROM. In the first approach, we restrict the input (and consequently, the data collection) to $\hat{\Ucal} = \{-1,1\}$ and compute the matrix $\bB_{1}$ via the difference between the two corresponding Koopman operators, i.e., $\bB_{1} = \frac{1}{2}(\bK_{1}-\bK_{-1})$. 
	In the second approach, we use a continuous input signal according to Eq.~\eqref{eq:ContinuousEDMD_Matrices}. 
	
	We observe that regardless of the approximation approach, the system can be predicted very accurately for approximately one second for most initial conditions $\bxi_0$, one of which is visualized in Fig.~\ref{fig:DuffingPrediction}. 
	There, the prediction of the states $x_1$ and $x_2$ is compared in (a) and (b), and the error for the $x_1$ component is shown in (c).
	We compare the solutions for the inputs $u(t) = \pm 1$ at which the data was collected (i.e., for the individual generator approximations) as well as for interpolated values $u(t) = 0$ and $u(t) = \sin(\pi t)$. 
	Hence, it can be concluded that the incorporation into an MPC scheme is promising. The specific choice of the modeling approach should depend on the problem setup.
	
	\begin{figure}
		\centering
		\parbox[b]{0.48\textwidth}{\centering (a) \\ \includegraphics[width=.43\textwidth]{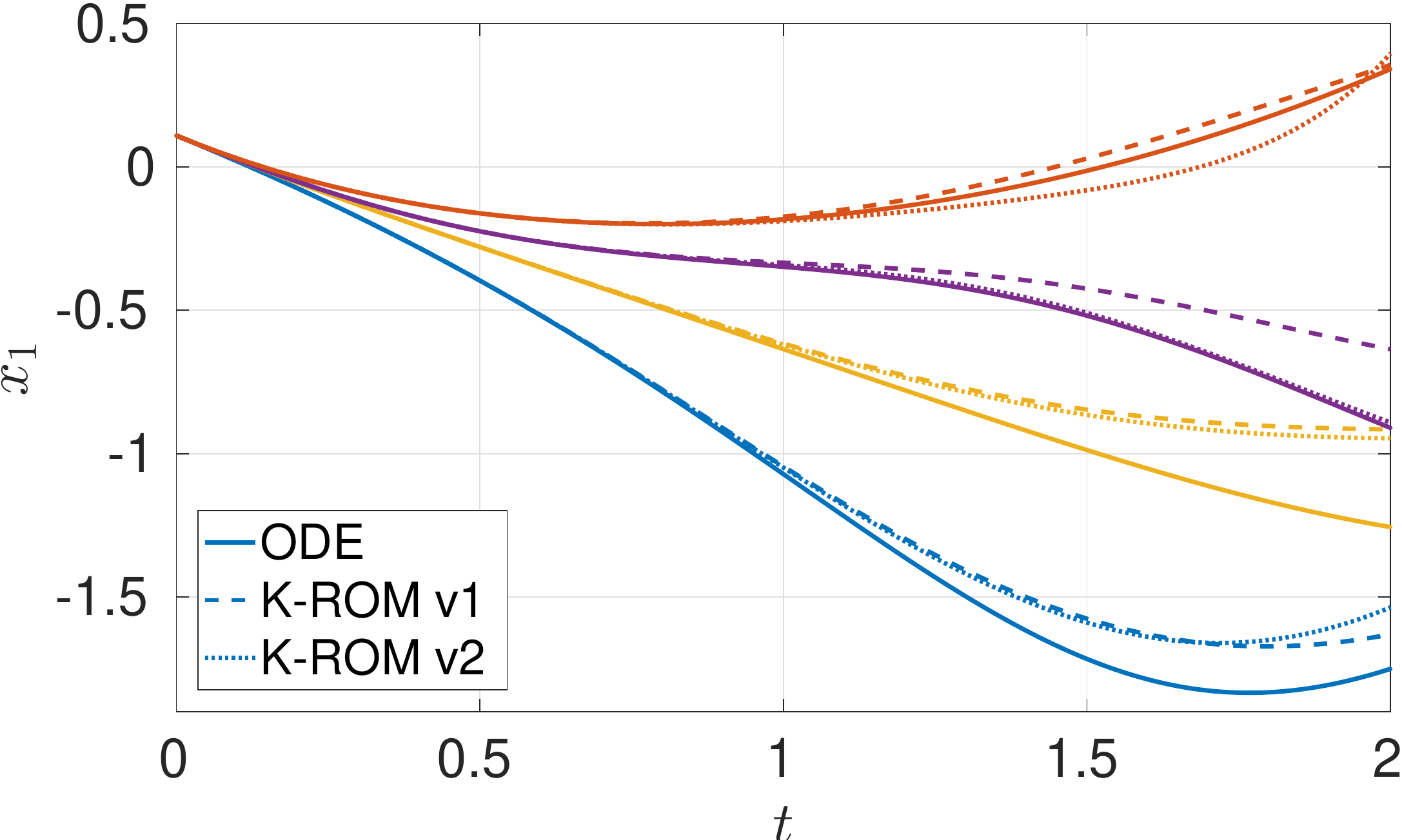}}
		\parbox[b]{0.48\textwidth}{\centering (b) \\ \includegraphics[width=.46\textwidth]{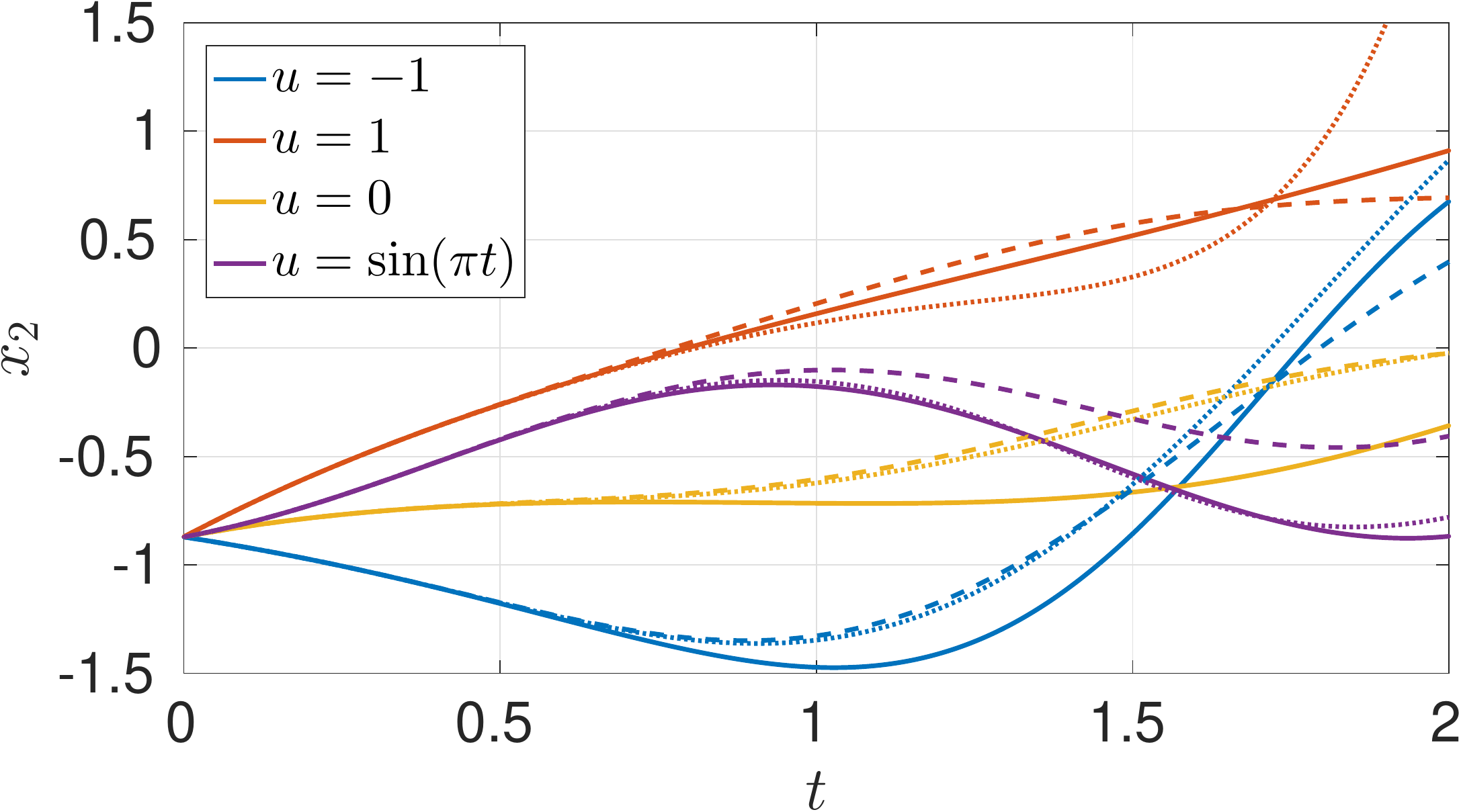}}
		\parbox[b]{0.48\textwidth}{\centering (c) \\ \includegraphics[width=.45\textwidth]{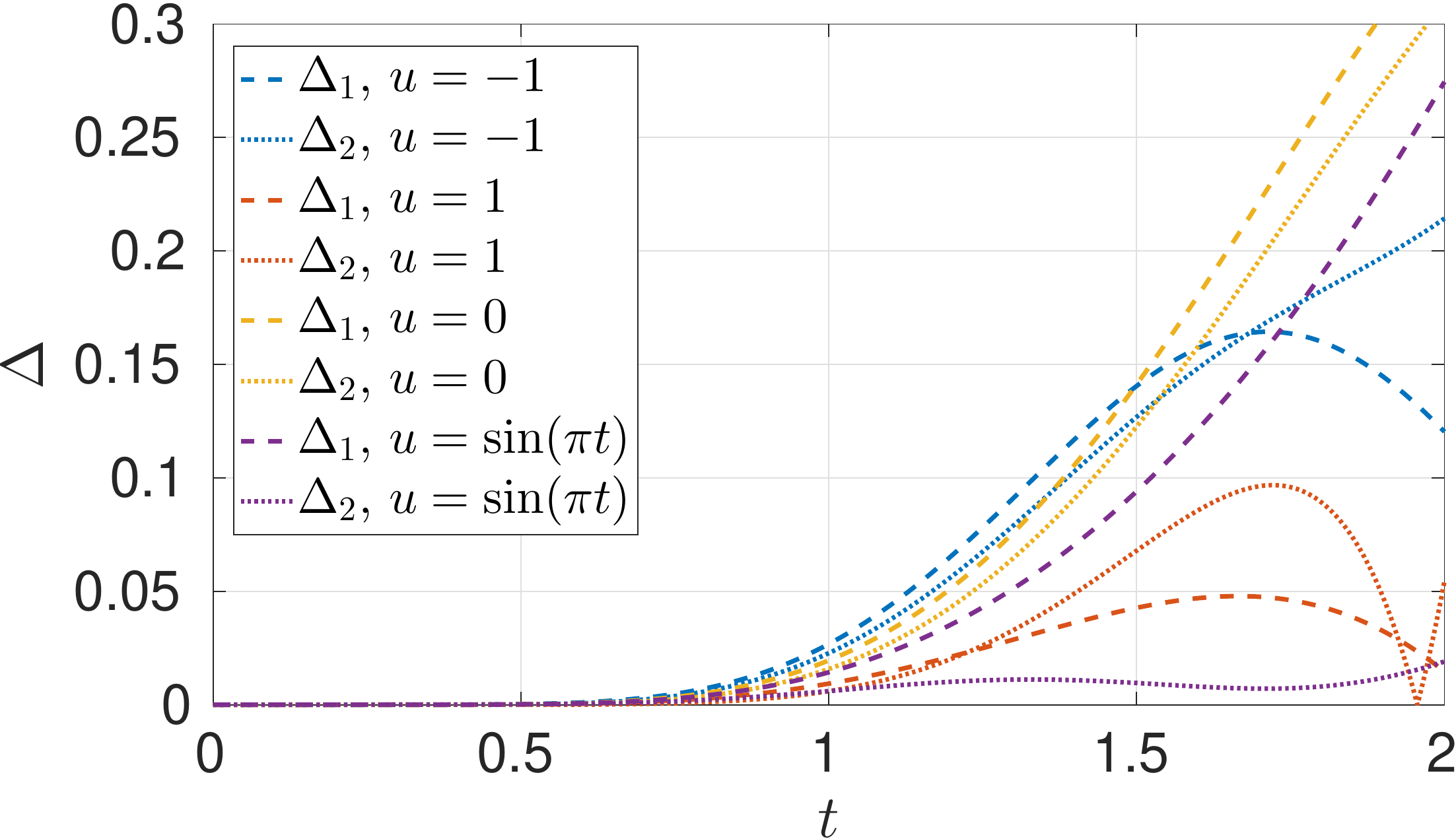}}
		\caption{Comparison between the true solution and the surrogate model for two different K-ROM approximations, i.e., construction via two Koopman operators at $u\pm1$ (v1, dashed lines) or construction according to Eq.~\eqref{eq:ContinuousEDMD_Matrices} (v2, dotted lines). In both cases, a dictionary of monomials up to order five was used. (a) $x_1$. (b) $x_2$. (c) Error $\Delta$ for $x_1$.}
		\label{fig:DuffingPrediction}
	\end{figure}
	
	\subsection{Example: Cylinder flow}
	\label{subsec:PDE}
	The approach shows its full potential when considering PDEs, which we will illustrate in the following example of a cylinder flow, governed by the two-dimensional incompressible Navier--Stokes equations at a moderate Reynolds number of $Re = 100$ (see Fig.~\ref{fig:vonKarman}~(a) for the problem setup): 
	\begin{align*}
		\dot{\bv}(\bxi,t) + \bv(\bxi,t) \cdot \nabla \bv(\bxi,t) &= \nabla p(\bxi,t) + \frac{1}{Re} \Delta \bv(\bxi,t), \\
		\nabla \cdot \bv(\bxi,t) &= \bzero, \\
		\by(\bxi,t_0) &= \by^0(\bxi),
	\end{align*}
	where $\bv$ and $p$ are the flow velocity and the pressure, respectively, depending on space $\bxi$ and time $t$. 
	The problem is discretized using a finite volume method and the PISO scheme for time integration \cite{FP02}. 
	All calculations are performed using the open source solver OpenFOAM \cite{JJT07}, and as we do not have explicit access to the time derivative, we use the first-order approximation via the Koopman operator as introduced in Sec.~\ref{subsec:PracticalApproximation}; i.e., predictions are performed using Eq.~\eqref{eqn:Discrete_EDMD_ROM}.
	
	The system is controlled via rotation of the cylinder; i.e., $u(t)$ is the angular velocity.
	The uncontrolled system possesses a periodic solution, the well-known \emph{von K\'{a}rm\'{a}n vortex street}.
	
	In order to obtain a low-dimensional reduced model and achieve a high acceleration, we construct the K-ROM only for the control relevant quantities, i.e., the lift  $L$ and drag force $D$ of the cylinder acting in vertical and horizontal direction, respectively. 
	Additionally, we observe the vertical velocity at six different positions $(\bxi_1,\ldots,\bxi_6)$ in the cylinder wake (see Fig.~\ref{fig:vonKarman}~(a)):
	\begin{equation*}
		\begin{aligned}
		\boldsymbol{\hat{\psi}}((\bv(\cdot,t), p(\cdot,t)) = \left(L(t), D(t), v_2(\bxi_1,t), \ldots, v_2(\bxi_6,t)\right).
		\end{aligned}
	\end{equation*}
	For a more accurate approximation via EDMD, we use as the observable $\boldsymbol{\psi}$ all monomials of $\boldsymbol{\hat{\psi}}$ up to order 2, which results in a K-ROM of dimension 45. 
	We collect data from one long term simulation with random control inputs $u_i\in\hat{\Ucal}=\{0,2\}$. 
	The time step in the time series is $0.25$ in comparison to the time step $0.01$ in the finite volume scheme. 
	Figs.~\ref{fig:vonKarman}~(b) and \ref{fig:vonKarman}~(c) show a comparison between the PDE and the K-ROM solution for constant control inputs $\ubar_0 = 0$ and $\ubar_1 = 2$. 
	We see that in both cases, the solutions agree remarkably well, considering that the finite volume discretization for this case consists of $22,000$ cells (i.e., $66,000$ unknowns per time step) and the K-ROM is a 45-dimensional linear model. 
	This results in a speed-up of approximately five orders of magnitude (OpenFOAM vs.~MATLAB). 
	Fig.~\ref{fig:vonKarman}~(d) shows a comparison of the solutions for a
        sinusoidal control and we see that the agreement is still satisfactory
        even though the system is not control affine and the K-ROM is only
        first-order accurate.
        
        \begin{figure}
        	\centering
        	\parbox[b]{0.48\textwidth}{\centering (a) \\ \includegraphics[width=.40\textwidth]{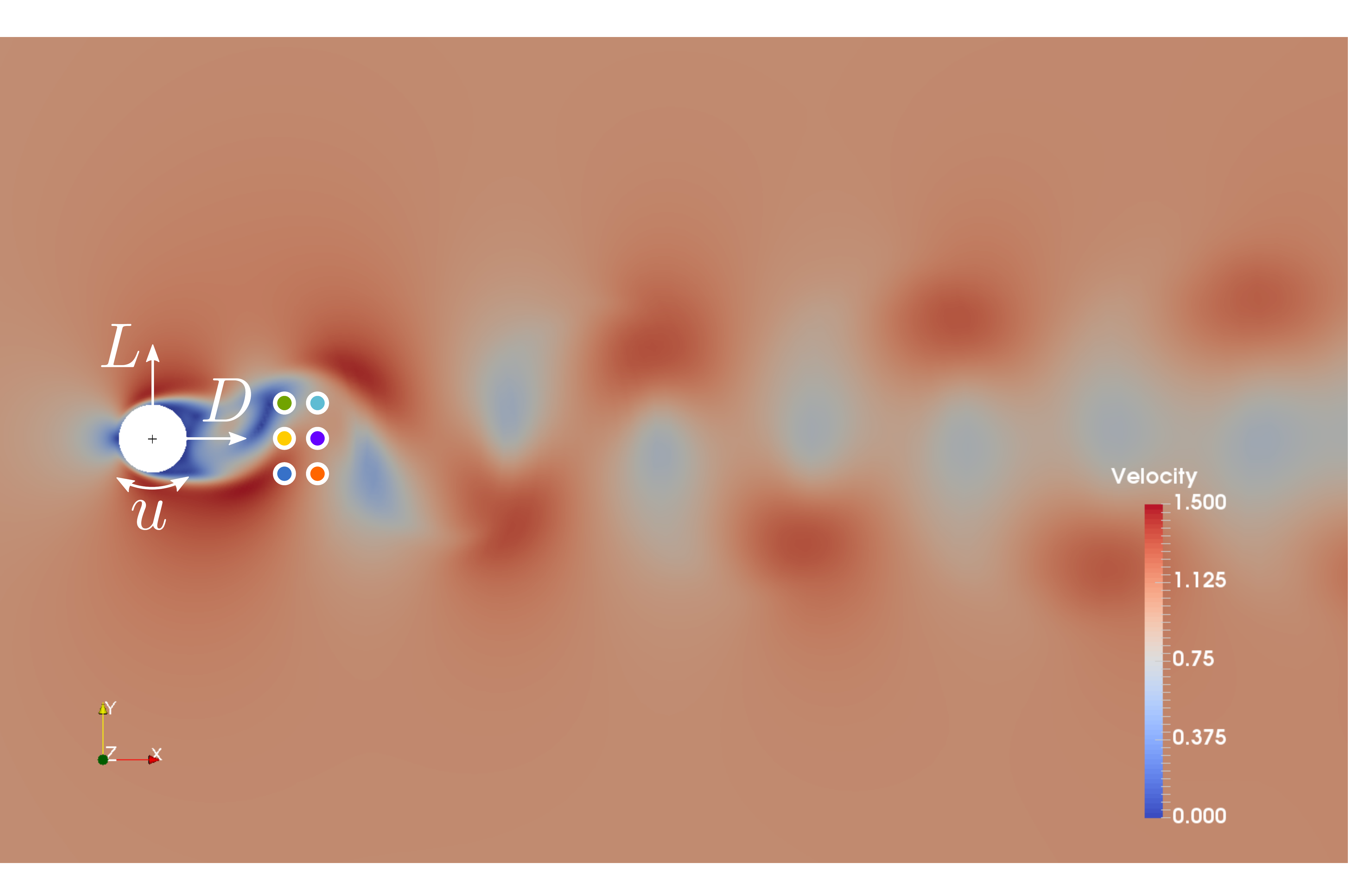}}
        	\parbox[b]{0.48\textwidth}{\centering (b) \\ \includegraphics[width=.40\textwidth]{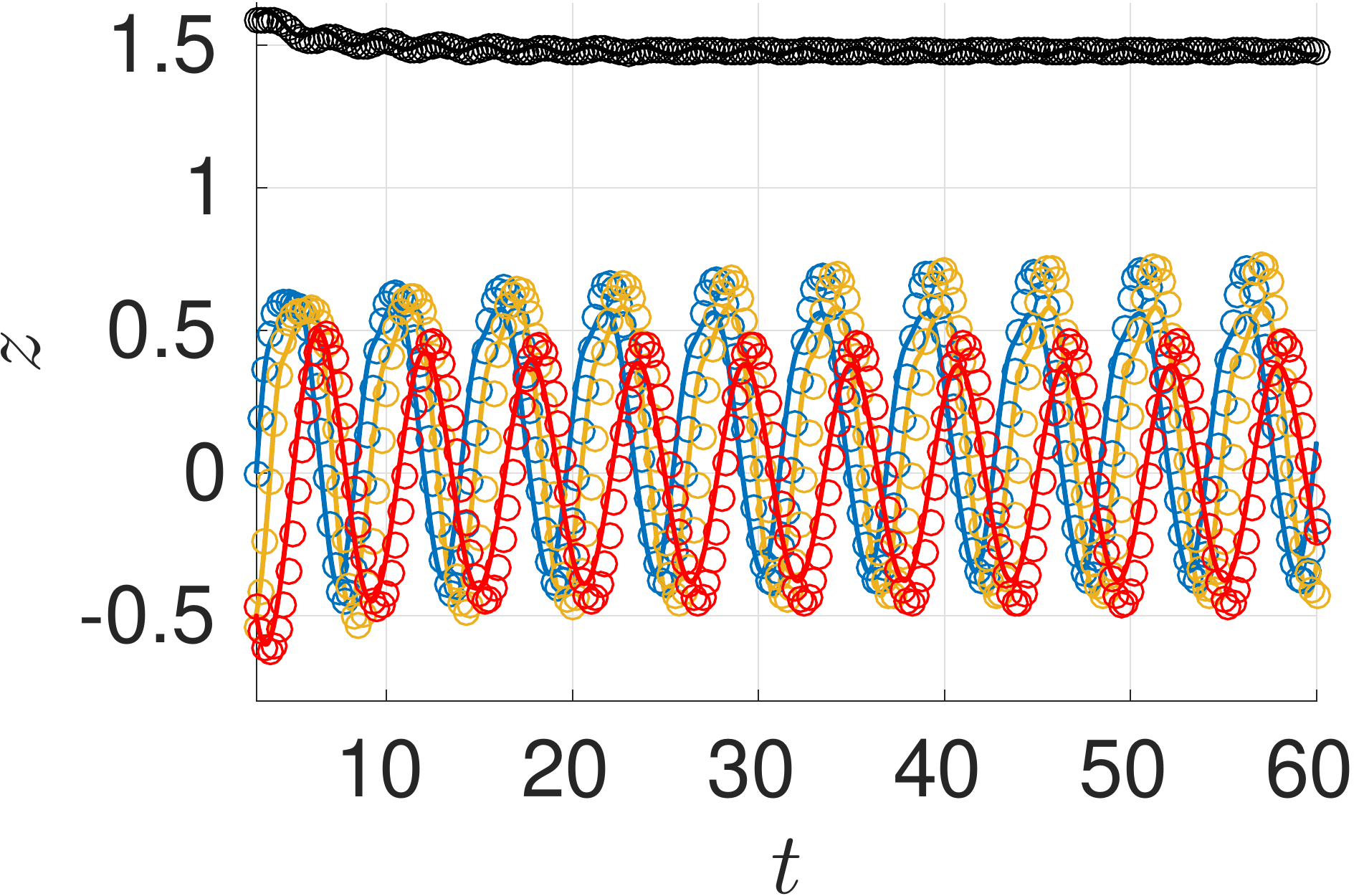}} \\[2ex]
        	\parbox[b]{0.48\textwidth}{\centering (c) \\ \includegraphics[width=.40\textwidth]{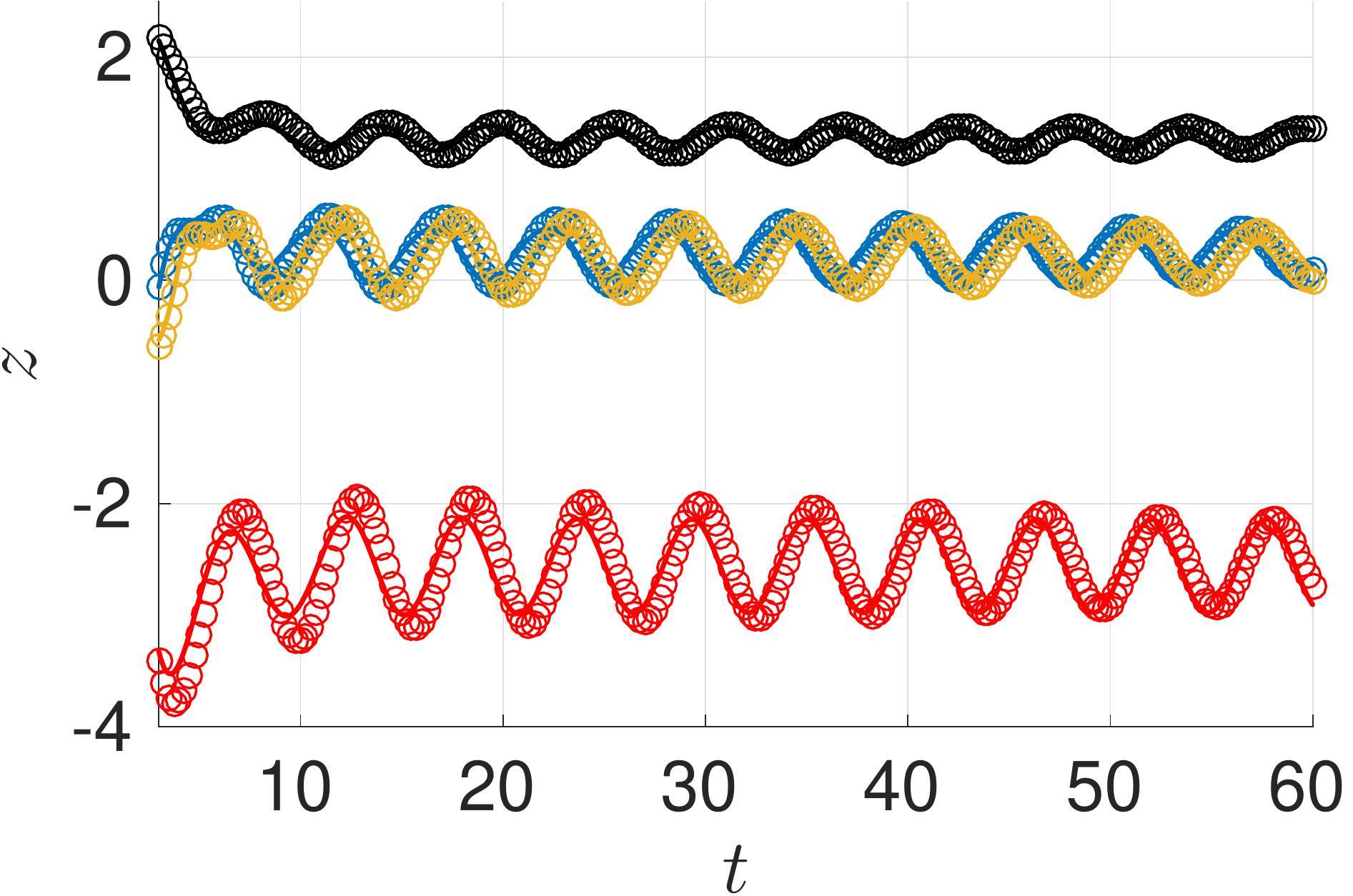}}
        	\parbox[b]{0.48\textwidth}{\centering (d) \\ \includegraphics[width=.40\textwidth]{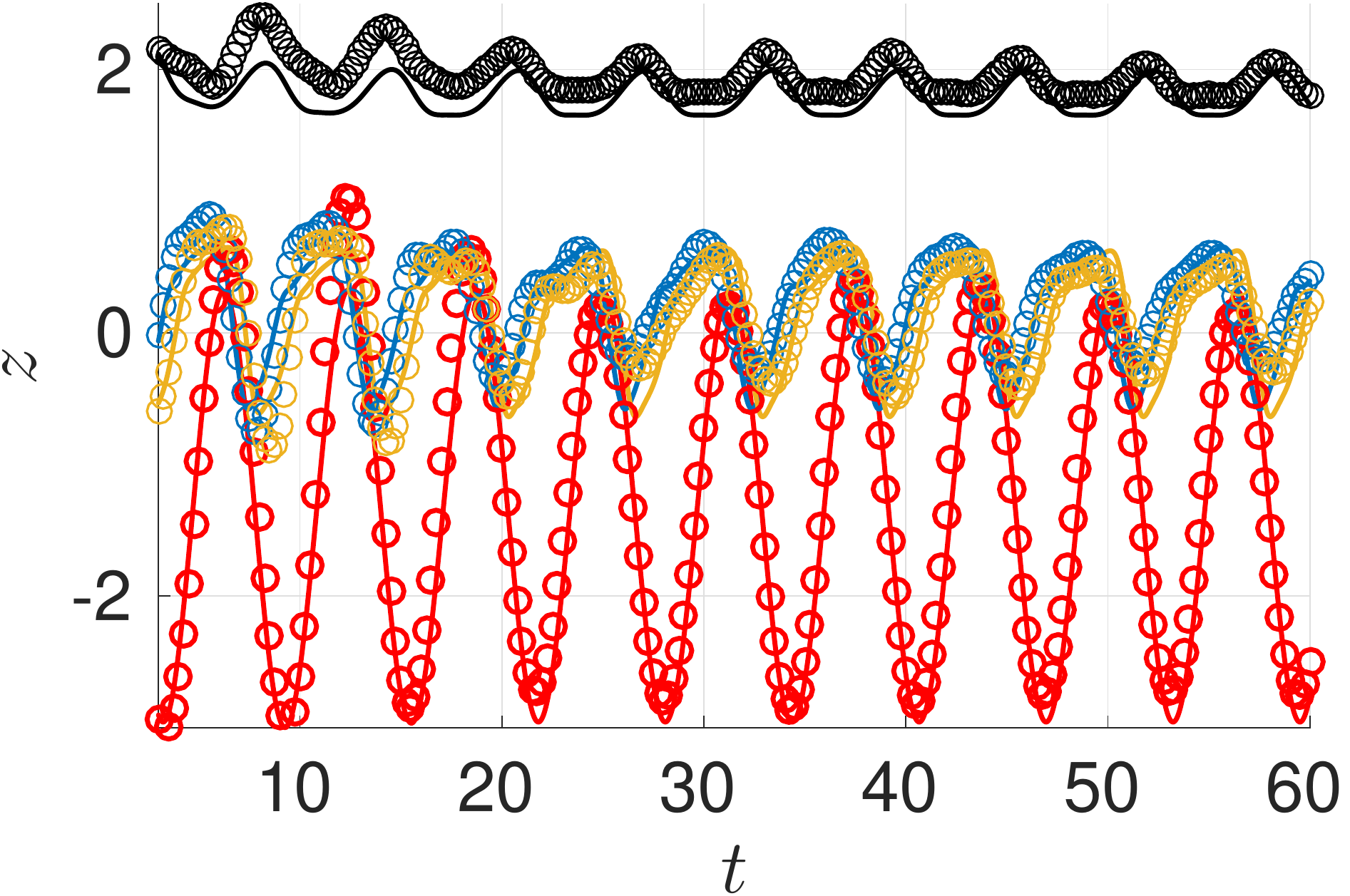}}
        	\caption{(a) Sketch of the problem setting. The system is controlled by rotating the cylinder. (b) and (c) Observation of the PDE solution and K-ROM approximation for $\ubar_a = 0$ and $\ubar_b = 2$, respectively. The lift is shown by the red line, the drag is shown in black and the two remaining lines are the vertical velocities at two of the six positions shown in (a). (d) Observation of the PDE solution and K-ROM approximation for $u(t) = 1 + \sin(t)$.
        	}
        	\label{fig:vonKarman}
        \end{figure}
	
	\section{MPC solution using EDMD-based ROMs}
	\label{sec:MPCsolution}
	
	Having shown the predictive capabilities of the affine Koopman generator approach \eqref{eqn:Continuous_EDMD_ROM} as well as the first-order approximation via the Koopman operator \eqref{eqn:Discrete_EDMD_ROM}, we can use these models in an MPC framework for feedback control.
	Similar to the K-ROM approximation \eqref{eq:MPC_STO_Koopman} of the
        mixed-integer problem \eqref{eq:MPC_STO}, we can introduce a K-ROM
        approximation of the original problem~\eqref{eq:MPC}.  To this end, we
        restrict the allowable input signals $\bu$ to
        lie in a finite-dimensional
        subspace of $L^2([t_0,t_e],\mathcal{U})$.  For instance, if we
        discretize the signal $\mathbf{u}(t)$
        using zero-order holds over intervals of length $\Delta t$, then we may
        choose this subspace to be all piecewise constant functions over
        intervals $I_k = t_0 + [k, k+1)\Delta t$, which we denote
	\begin{equation*}
	\mathcal{F}([t_0, t_e], \mathcal{U}) \triangleq \left\lbrace \sum_{k=0}^{\lceil (t_e-t_0)/\Delta t \rceil}\bu_k \mathbf{1}_{I_k}(t):\ \bu_k\in\mathcal{U} \right\rbrace.
	\end{equation*}
	Alternatively, our actuators might have limited bandwidth, in which case
        it might make sense to discretize the input as a Fourier series up to
        some maximum frequency $\omega_l$, and this would give rise to a
        different finite-dimensional subspace.
        We then approximate the original optimal control problem~(\ref{eq:MPC}) as
	\begin{equation} \label{eq:MPC_Koopman} \tag{K-MPC}
	\begin{aligned}
	\min_{\bu \in \mathcal{F}([t_0, t_e], \mathcal{U})} & J = \int_{t_0}^{t_e} \hat{L}(\bz(t), \bu(t), t)\ dt \\
	\mbox{s.t.}\quad \dot{\bz}(t) &= \left(\bK_{\bzero}+ \sum_{i=1}^{n_c} u_i(t) \bB_{\mathbf{e}_i}\right) \bz(t) \\
	\bz(0) &= \boldsymbol{\psi}(\bx_0).
	\end{aligned}
	\end{equation}
	
	In this section, we will focus on solving the general problem~\eqref{eq:MPC_Koopman}.
	An adjoint-based approach will be used to efficiently obtain gradients of the objective with respect to the input signal subject to the dynamical constraints imposed by the K-ROM.
	These gradients together with the linearized reduced-order model and adjoint equations yield necessary first-order conditions for optimality.
	We will also see that the bilinear structure of the K-ROM enables an efficient Newton-type solver for these first-order optimality conditions.
	Alternatively, if there are additional constraints on the input or allowable states of the system, sequential quadratic programming or interior-point methods will also benefit from the derived gradient.
	
	\subsection{Gradient and first-order optimality}
	In order to perform optimization, let us expand the input signal
	\begin{equation*}
	\mathbf{u}(t) = \sum_{k=1}^d \hat{u}_k \boldsymbol{\varphi}_k(t)
	= \boldsymbol{\varphi}(t)\mathbf{\hat{u}}
	\end{equation*}
	in an orthonormal basis $\lbrace \boldsymbol{\varphi}_1$, $\ldots$, $\boldsymbol{\varphi}_d\rbrace$ for $\mathcal{F}([t_0, t_e], \mathcal{U})$.
	Once the gradient $\boldsymbol{\nabla}_{\mathbf{u}}J$ has been computed with respect to a general $\mathbf{u}\in L^2([t_0, t_e], \mathcal{U})$, it is easy to obtain the gradient over the subspace $\mathcal{F}([t_0, t_e], \mathcal{U})$ by computing
	\begin{equation*}
	\nabla_{\hat{u}_k}J = \left\langle \boldsymbol{\varphi}_k,\ \boldsymbol{\nabla}_{\mathbf{u}}J \right\rangle.
	\end{equation*}
	
	In order to find the gradient of the objective $J$ with respect to the input signal $\mathbf{u}$, let us introduce small perturbations $\delta\mathbf{z}$ and $\delta \mathbf{u}$ that are related by the linearized model dynamics
	\begin{equation*}
		\left(\frac{d}{dt} - \mathbf{K}_{\mathbf{0}} - \sum_{i=1}^{n_c}u_i(t) \mathbf{B}_{\mathbf{e}_i} \right)\delta \mathbf{z}(t) = \begin{bmatrix}
		\mathbf{B}_{\mathbf{e}_1}\mathbf{z}(t) & \cdots & \mathbf{B}_{\mathbf{e}_{n_c}}\mathbf{z}(t)
		\end{bmatrix} \delta \mathbf{u}(t).
	\end{equation*}
	Since we are assuming the initial condition is fixed, the initial condition for the perturbation is zero $\delta \mathbf{z}(0) = \mathbf{0}$.
	
	Denoting the inner product of real vector-valued signals by
	\begin{equation*}
	\langle \mathbf{z},\ \boldsymbol{\lambda} \rangle = \int_{t_0}^{t_e} \mathbf{z}(t)^T \boldsymbol{\lambda}(t)\ dt,
	\end{equation*}
    the change in the objective corresponding to these small perturbations is given by
	\begin{align*}
	\delta J = \langle \boldsymbol{\gamma},\ \delta \mathbf{z} \rangle &+ \langle \boldsymbol{\rho},\ \delta \mathbf{u} \rangle, \\
	\mbox{where} \quad \boldsymbol{\gamma}(t) = \boldsymbol{\nabla}_{\mathbf{z}}\hat{L}(\mathbf{z}(t), \mathbf{u}(t), t) \quad &\mbox{and}\quad \boldsymbol{\rho}(t) = \boldsymbol{\nabla}_{\mathbf{u}}\hat{L}(\mathbf{z}(t), \mathbf{u}(t), t).
	\end{align*}
	We want to express the first inner product in the above equation in terms of $\delta \mathbf{u}$.
	In order to do this, define the adjoint variable $\boldsymbol{\lambda}$ that obeys the adjoint linear differential equation
	\begin{equation}
	\label{eqn:generatorModelAdjoint}
	\boxed{
		\left(-\frac{d}{dt} - \mathbf{K}_{\mathbf{0}}^T - \sum_{i=1}^{n_c}u_i(t) \mathbf{B}_{\mathbf{e}_i}^T \right)\boldsymbol{\lambda}(t) = \boldsymbol{\gamma}(t),
	}
	\end{equation}
	subject to the final condition $\boldsymbol{\lambda}(t_e)=\mathbf{0}$.
	This equation is easily solved backwards in time.
	Integration by parts and substitution of the linearized model and adjoint dynamics yields the desired relationship
	\begin{multline*}
		\langle \boldsymbol{\gamma},\ \delta \mathbf{z} \rangle = 
		\int_{t_0}^{t_e}\boldsymbol{\lambda}(t)^T\left(\frac{d}{dt} - \mathbf{K}_{\mathbf{0}} - \sum_{i=1}^{n_c}u_i(t) \mathbf{B}_{\mathbf{e}_i} \right)\delta \mathbf{z}(t)\ dt \\
		= \int_{t_0}^{t_e}\boldsymbol{\lambda}(t)^T
		\begin{bmatrix}
	        \mathbf{B}_{\mathbf{e}_1}\mathbf{z}(t) & \cdots & \mathbf{B}_{\mathbf{e}_{n_c}}\mathbf{z}(t)
	    \end{bmatrix} \delta \mathbf{u}(t)\ dt.
	\end{multline*}
	From this, the gradient with respect to the input $\mathbf{u}(t)$ at time $t$ is given explicitly by
	\begin{equation}
	\label{eqn:ObjectiveGradient}
	\boxed{
		\boldsymbol{\nabla}_{\mathbf{u}(t)}J = \begin{bmatrix}
		\mathbf{z}(t)^T\mathbf{B}_{\mathbf{e}_1}^T \\
		\vdots \\
		\mathbf{z}(t)^T\mathbf{B}_{\mathbf{e}_{n_c}}^T
		\end{bmatrix} \boldsymbol{\lambda}(t) + \boldsymbol{\rho}(t).
	}
	\end{equation}
	
	Having an efficient gradient expression at hand, we can now use a standard gradient-based optimization algorithm such as the quasi-Newton method of Broyden, Fletcher, Goldfarb, and Shanno (BFGS) \cite{fletcher2013newton}, which we will use for the numerical examples in the following section. An alternative approach is to ``first discretize then optimize'' approaches (as are popular in optimal control \cite{JMO05}); or, if a more efficient solution method is required, one could use a Newton-solver, as is explained in more detail in appendix~\ref{app:NewtonSolver}.

	\begin{remark}
		In order to solve Problem~\eqref{eq:MPC_Koopman} using the first-order accurate Koopman operator interpolation discussed in Sec.~\ref{subsec:PracticalApproximation}, we use a discretization according to Eq.~\eqref{eqn:Discrete_EDMD_ROM}:
		\begin{equation}\label{eq:MPC_Koopman_discrete} \tag{K-MPC\textsuperscript{d}}
			\begin{aligned}
				\min_{\bu \in \Ucal^{\ell}} &\sum_{i=0}^{\ell-1} \hat{L}_{i+1}(\bz_{i+1},\bu_{i+1}) \\
				\mbox{s.t.}\quad \bz_{i+1} &= \bK^{\Delta t}_{\mathbf{0}}\bz_i + \sum_{j=1}^{n_c} [\bu_{i}]_j \bB^{\Delta t}_{\mathbf{e}_j}\bz_i \\
				\bz_0 &= \boldsymbol{\psi}(\bx_0).
			\end{aligned}
		\end{equation}
		Here, the operators $\bK^{\Delta t}$, $\bB^{\Delta t}_{\mathbf{e}_j}$ can equivalently represent EDMD-based approximations of the Koopman operators over $\Delta t$ using \eqref{eqn:Discrete_EDMD_ROM}, or Euler integration of the generator approximations obtained using eqn.~\eqref{eqn:Continuous_EDMD_ROM} over the same time interval as shown in theorem \ref{thm:AffineKoopmanAndEulerIntegration}.
		A similar reformulation for the adjoint equation \eqref{eqn:generatorModelAdjoint} can be made, and a discretized version of the gradient \eqref{eqn:ObjectiveGradient} can be computed.
	\end{remark}
	
	\section{Numerical examples}
	\label{sec:MPC}
	
	In this section, we demonstrate the remarkable performance of our data-efficient MPC framework using the affine generator K-ROM as well as the first-order accurate Koopman operator version.
	
	\subsection{Forced Duffing Equation}
	\begin{figure}[b!]
		\centering
		\parbox[b]{0.48\textwidth}{\centering (a) \\ \includegraphics[width=.45\textwidth]{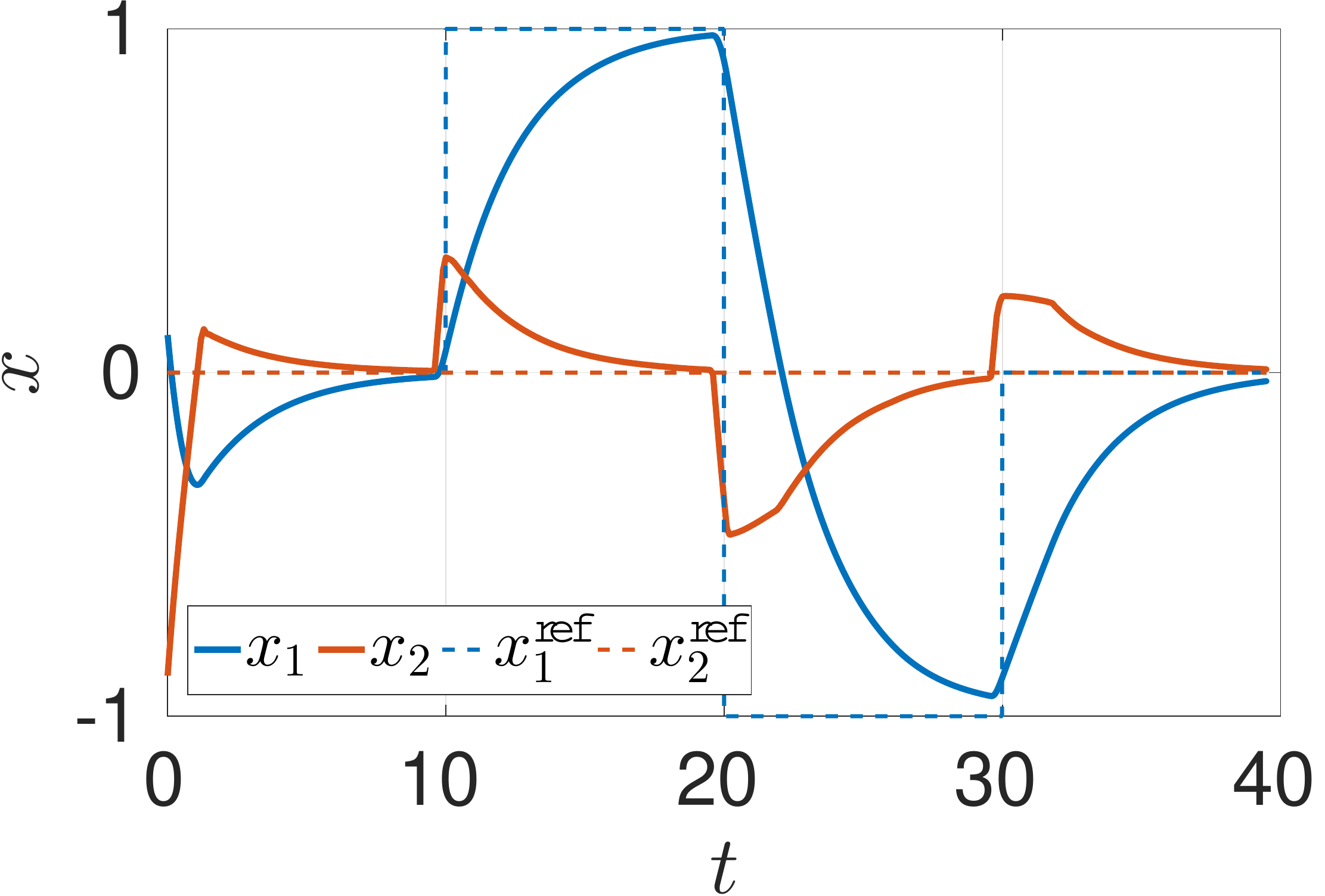}} \hfil
		\parbox[b]{0.48\textwidth}{\centering (b) \\ \includegraphics[width=.45\textwidth]{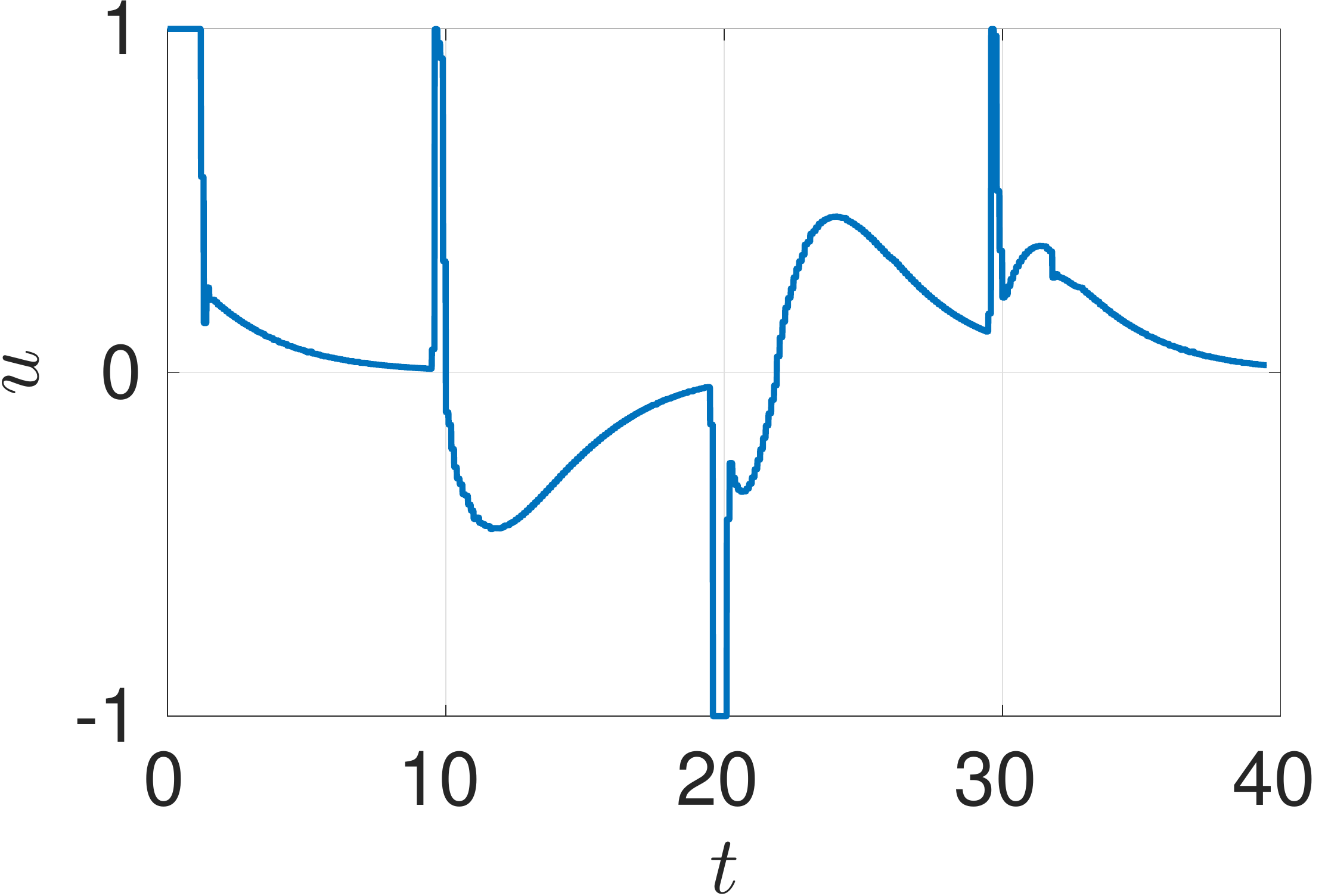}}
		\caption{Model predictive control of the Duffing equation. (a) Stabilization of three different stationary points. (b) The corresponding control input.}
		\label{fig:DuffingMPC}
	\end{figure}
	For the Duffing equation, we use the same numerical approximation for the generator as presented in Section~\ref{subsec:GenDuffing}. 
	We discretize Problem \eqref{eq:MPC_Koopman} by integrating the generator over a time step of $\Delta t = 0.1$ sec, which results in Problem \eqref{eq:MPC_Koopman_discrete}. 
	We then set $\ell = 5$ and prescribe a piecewise constant reference trajectory in order to stabilize the system at different points on the $x_1$-axis.
	The MPC problem is solved repeatedly until the final time of $40$ sec is reached.
	The result is shown in Fig.~\ref{fig:DuffingMPC}, where the stabilization of three different points is shown. 
	When comparing the performance to a controller using the nonlinear dynamics, the results are indistinguishable.
	
	\subsection{1D Burgers equation}
	As a second example, we consider the 1D Burgers equation with periodic boundary conditions and $\nu = 0.01$:
	\begin{align*}
		\dot{v}(t,\xi) - \nu \Delta v(t,\xi) + v(t,\xi) \nabla v(t,\xi) &= u(t) \chi(\xi).
	\end{align*}
	Here, $v$ denotes the state depending on space $\xi$ and time $t$, and the system is controlled by a shape function $\chi$ (see Fig.~\ref{fig:BurgersMPC} (a)), scaled by the input $u(t)\in [-0.025, 0.075]$. 
	Similar to the cylinder flow example, the data is collected from one trajectory with a piecewise constant input signal $u \in \{ -0.025, 0.075\}$. To realize a fast controller, we do not observe the full state, but only four points distributed equidistantly in space, cf.~Fig.~\ref{fig:BurgersMPC}~(a). 
	The observable $\psi$ then contains all monomial of these observations up to a a degree of two.
	Finally, we set the time step to $\Delta t = 0.5$ and choose a prediction horizon $\ell = 3$.
	
	The aim is to track a sinusoidal reference trajectory, i.e.,
	\[
		v^{\mathsf{ref}}(t, \xi) = 0.05 \sin(\pi t / 30) + 0.5 \quad \mbox{for}~\xi\in \{0,0.5,1,1.5\}.
	\]
	The solution of the K-ROM MPC is shown in Fig.~\ref{fig:BurgersMPC}, where the optimal input is shown in (b) and the optimal trajectory of the observable is shown in (c). 
	Finally, the corresponding trajectory of the full-state is shown in (d), and we see that we succeed in controlling the nonlinear Burgers equation using a 15-dimensional bilinear surrogate model which can be solved many orders faster than the full-state problem.
	
	\begin{figure}[h!]
		\centering
		\parbox[b]{0.48\textwidth}{\centering (a) \\ \includegraphics[width=.42\textwidth]{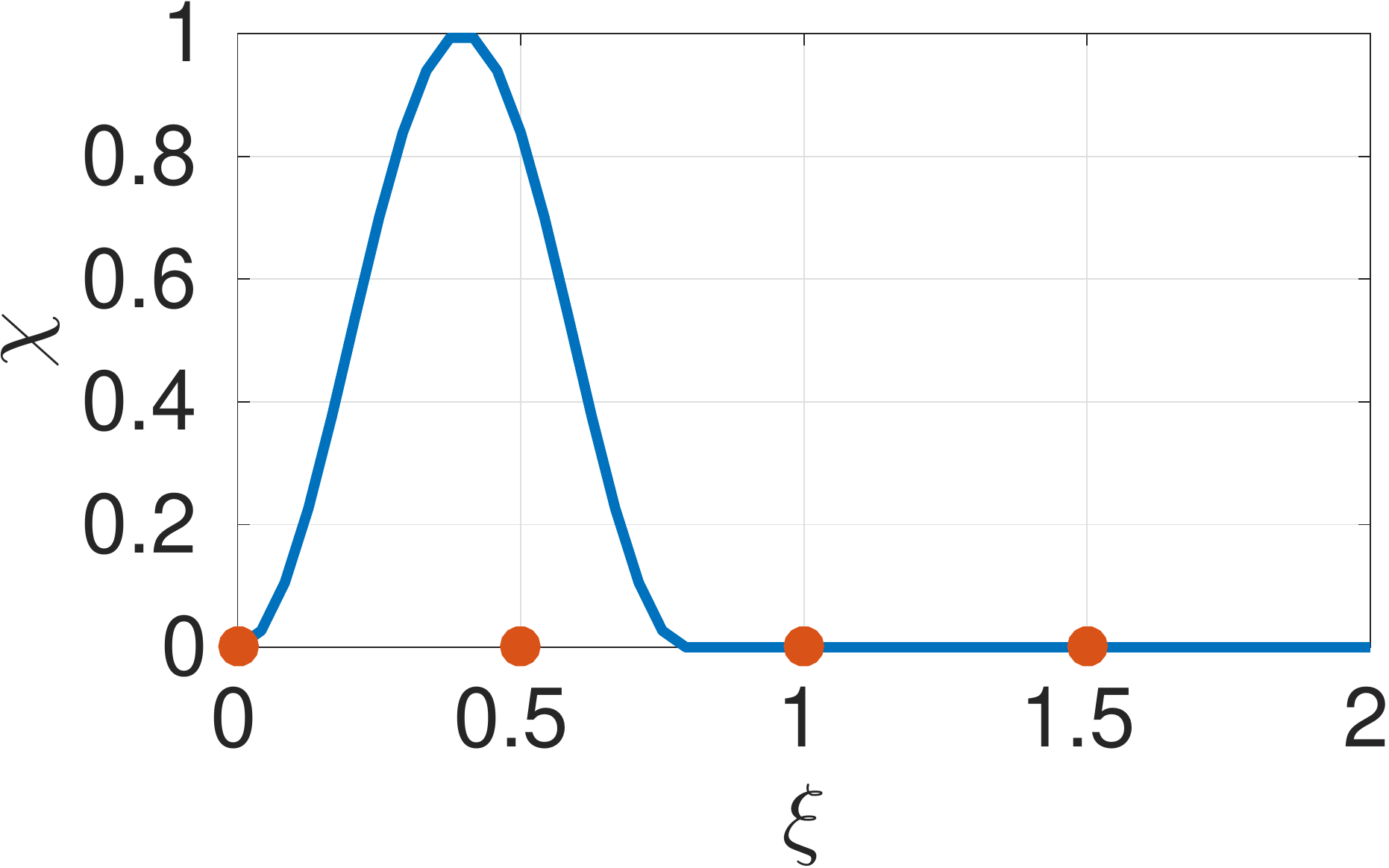}} \hfil
		\parbox[b]{0.48\textwidth}{\centering (b) \\ \includegraphics[width=.44\textwidth]{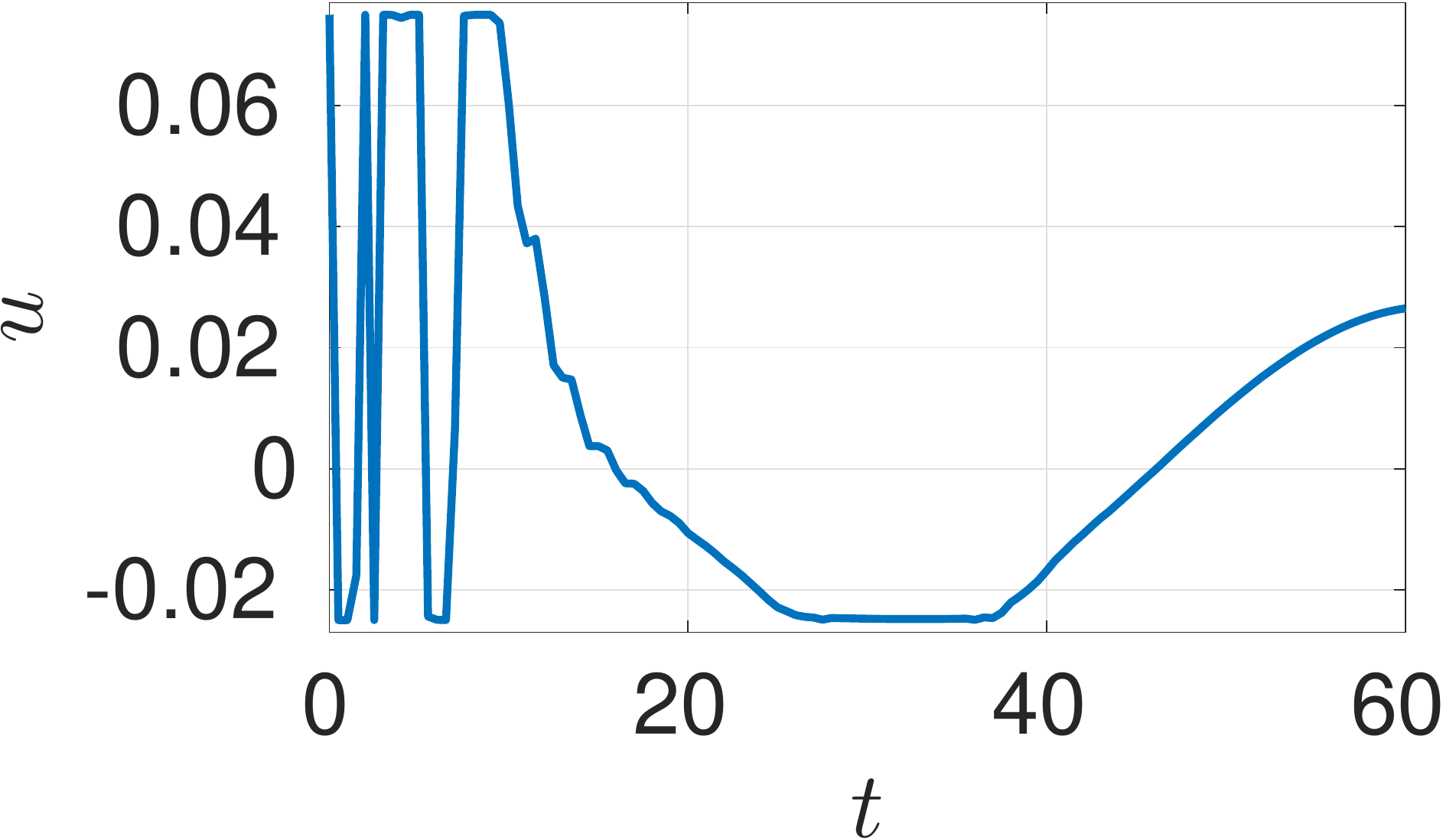}} \\
		\parbox[b]{0.48\textwidth}{\centering (c) \\ \includegraphics[width=.42\textwidth]{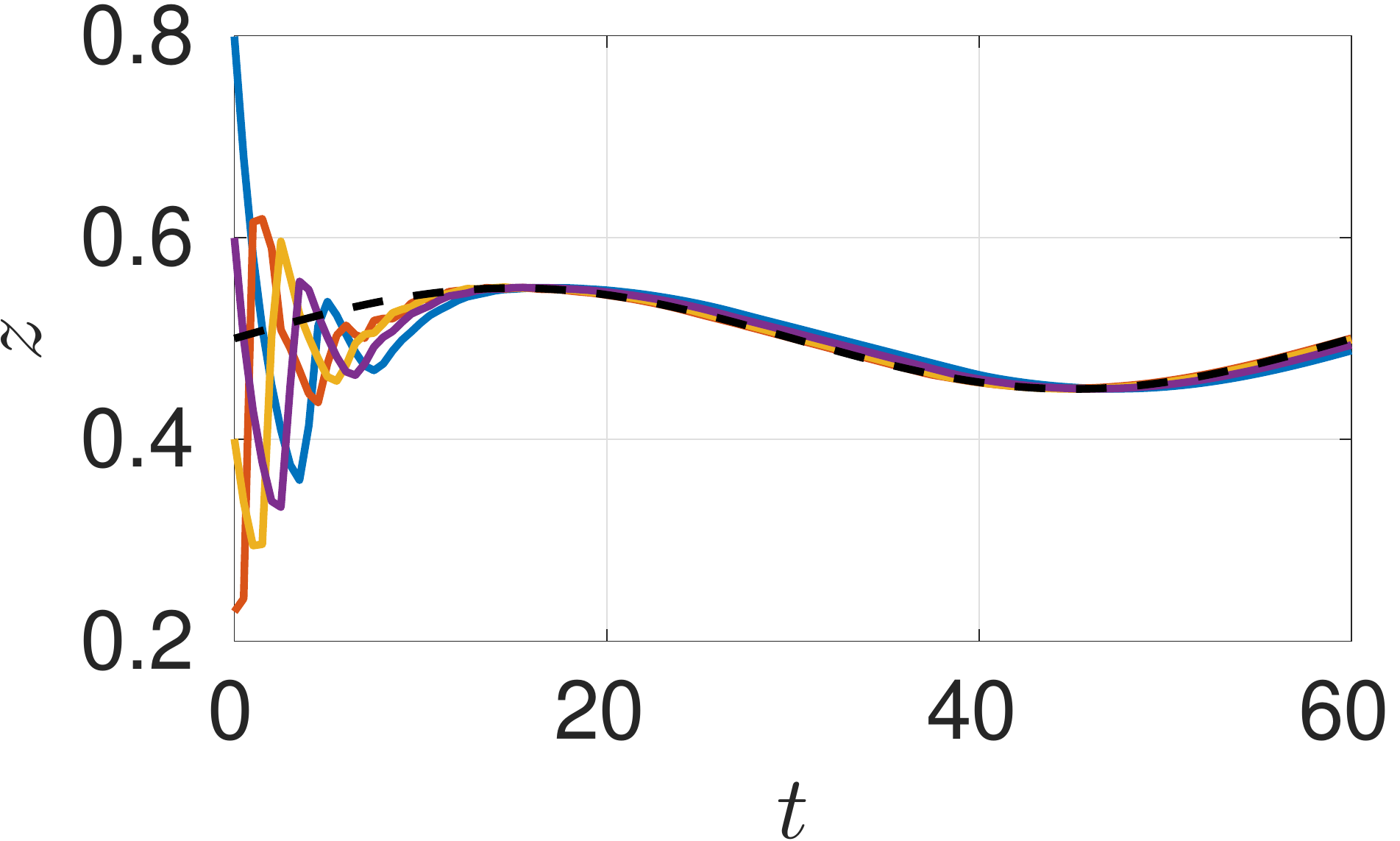}} \hfil
		\parbox[b]{0.48\textwidth}{\centering (d) \\ \includegraphics[width=.42\textwidth]{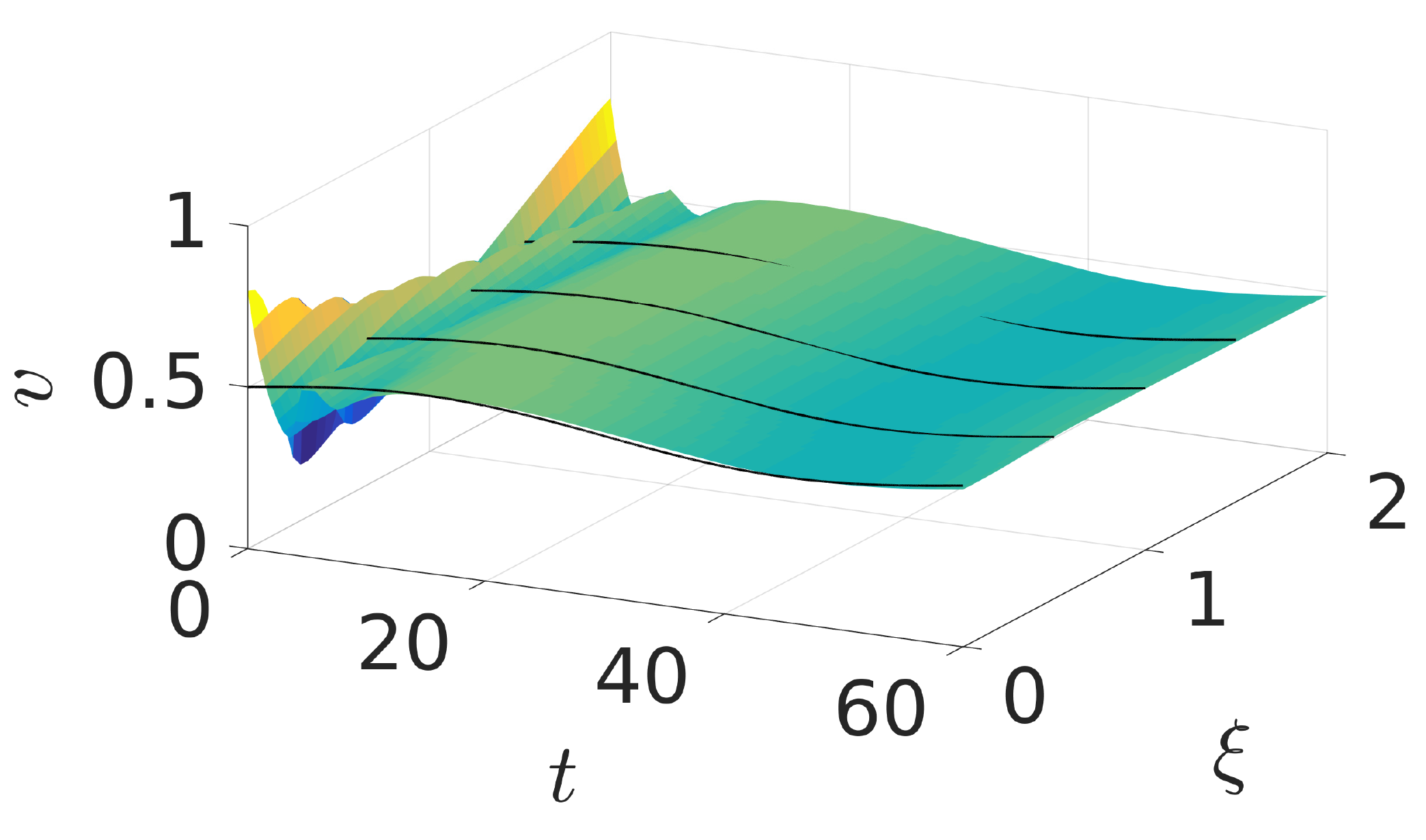}}
		\caption{Model predictive control of the 1D Burgers equation. (a) Shape function $\chi$ and the points at which the state is observed. (b) The optimal control input. (c) The optimal trajectory of the observed state as colored lines. The reference trajectory is identical for all four states. (d) The corresponding full-state solution.}
		\label{fig:BurgersMPC}
	\end{figure}
	
	\subsection{The fluidic pinball}
	As our final example, we use a flow control problem of high complexity. 
	In particular, we consider the \emph{fluidic pinball} \cite{DPMN18}, which shows chaotic behavior already at moderate Reynolds numbers. 
	The fluidic pinball is a configuration with three cylinders placed on the edges of an equidistant triangle with edge length $1.5 R$, where $R$ is the cylinder radius, cf.~Fig.~\ref{fig:fluidicPinball}. 
	Similar to the cylinder flow example in Section~\ref{subsec:PDE}, we use the first order approximation via the Koopman operator, as we do not have an explicit representation of the time derivative of the state. 
	In order to entirely avoid pointwise evaluations of the flow field, we only observe the forces acting on the cylinders. 
	To increase the number of observables, we use delay coordinates, which is a very common approach \cite{BBP+17,BPB+20}:
	\begin{equation*}
		\begin{aligned}
		\bz_i = \left(L^1_i, L^2_i, L^3_i, D^1_i, D^2_i, D^3_i, L^1_{i-1}, L^2_{i-1}, L^3_{i-1}, D^1_{i-1}, D^2_{i-1}, D^3_{i-1}\right),
		\end{aligned}
	\end{equation*}
	where $\bz_i$ is the observed quantity at time $t_i$, i.e., $\bz_i = \bz(t_i)$. 
	The goal is to control the lift forces of all three cylinders by rotating the cylinders two and three. 
	Hence, we simply have to track the corresponding entries of $z$ in the MPC problem:
	\begin{equation*}
		\begin{aligned}
		\min_{\bu \in [0,1]^{2\times \ell}} \sum_{i=0}^{\ell} &\left[\sum_{j=1}^{3}\left([\bz_i]_{j} - [\bz_i^{\mathsf{ref}}]_{j}\right)^2 + \gamma \|\bu_i\|_2 \right] \\
		\text{s.t.}\quad \bz_{i+1} = &\bK^{\Delta t}_{\bzero} \bz_{i} + \sum_{j=1}^{2} [\bu_i]_{j} \bB^{\Delta t}_{\mathbf{e}_j} \bz_{i} \quad \text{for}~i = 0,\ldots,\ell, \\
		\bz_0 = & \boldsymbol{\psi}(f(\bv_0, p_0)),
		\end{aligned}
	\end{equation*}
	where $\gamma$ is a weighting parameter.
	\begin{figure}
		\centering
		\includegraphics[width=.65\textwidth]{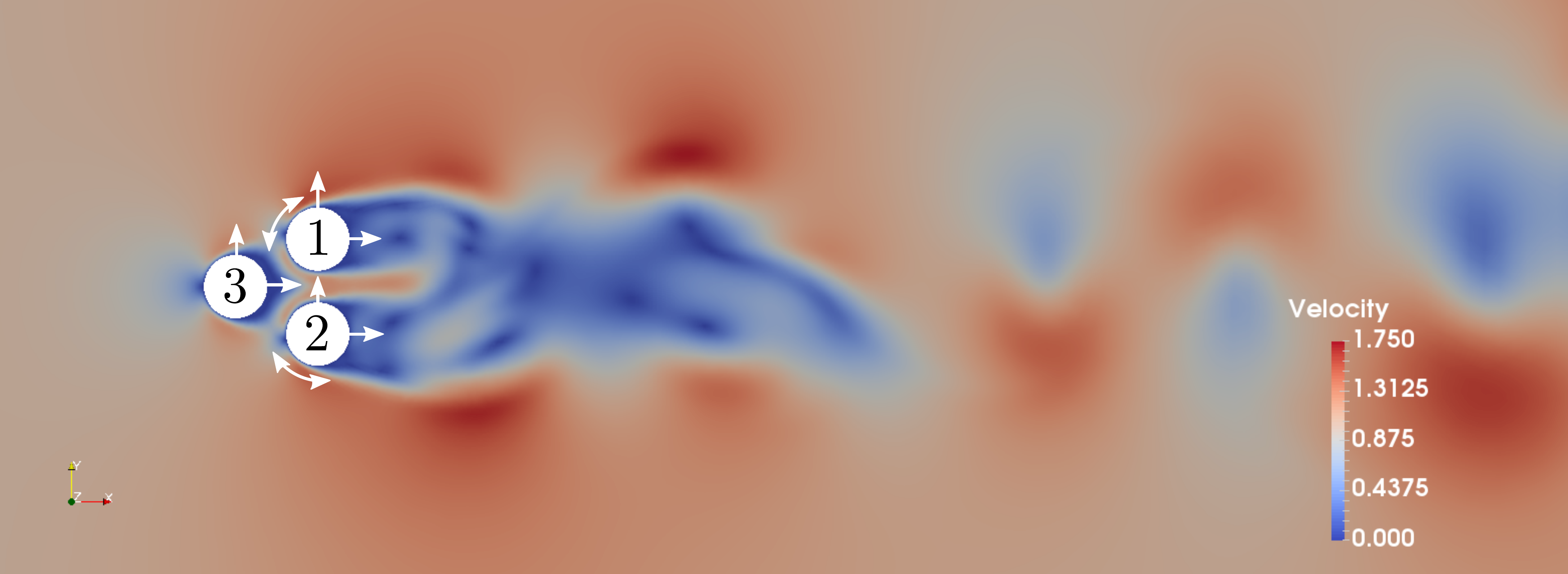}
		\caption{Setup of the fluidic pinball according to \cite{DPMN18}.}
		\label{fig:fluidicPinball}
	\end{figure}
	We want to allow control inputs between $-2$ and $2$ in each direction and since the control does not enter into the system in an affine manner, we create multiple K-ROMs which are \emph{localized\/} in the control domain. 
	This means that we approximate five Koopman operators $\Kcal_{\mathcal{I}}$ with 
	\begin{equation*}
	\begin{aligned}
	\mathcal{I} \in \left\{ 
	\left(\begin{array}{c} 0 \\ 0 \end{array}\right), 
	\left(\begin{array}{c} -2 \\ \color{white}-\color{black}0 \end{array}\right), 
	\left(\begin{array}{c} 2 \\ 0 \end{array}\right), 
	\left(\begin{array}{c} \color{white}-\color{black}0 \\ -2 \end{array}\right), 
	\left(\begin{array}{c} 0 \\ 2 \end{array}\right)
	\right\}
	\end{aligned}
	\end{equation*}
	and construct four reduced models of the form \eqref{eqn:Discrete_EDMD_ROM}, each of which is valid on the respective part of the control domain, cf.~Fig.~\ref{fig:LocalKROM}. 
	Similar ideas of localized reduced order models are often used in the \emph{reduced basis} community (cf., e.g., \cite{AHKO12,BDPV18}).
	\begin{figure}
		\centering
		\includegraphics[width=0.4\textwidth]{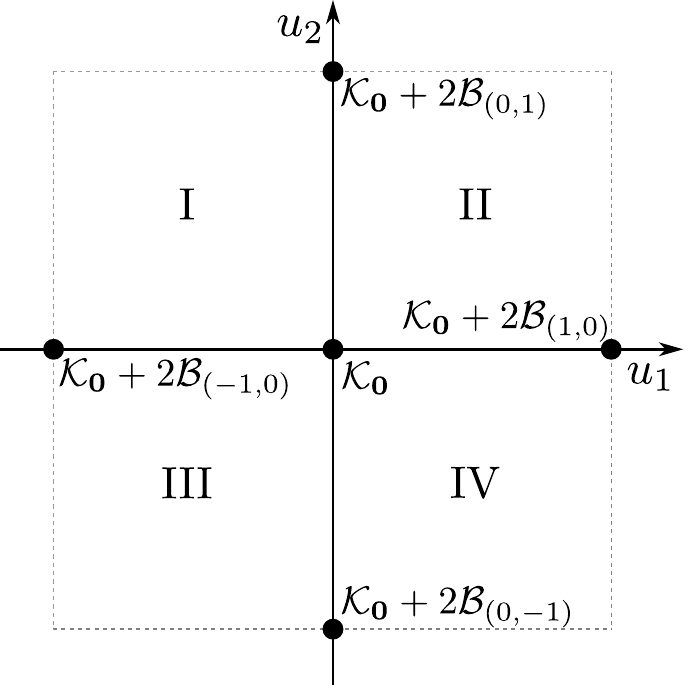}
		\caption{The four surrogate models I to IV are locally valid in the control domain $\Ucal$.}
		\label{fig:LocalKROM}
	\end{figure}
	
	We now study the performance of the behavior of the K-ROM-based controller for different Reynolds numbers. 
	In \cite{DPMN18} it was argued that the fluidic pinball without control possesses a quasi-periodic solution for $90 < Re < 120$, and that it behaves chaotically for $Re \geq 120$. 
	Consequently, it becomes more and more challenging to construct accurate surrogate models with increasing Reynolds number.
	
	As the first case, we set $Re=100$, i.e., we have quasi-periodic dynamics in the uncontrolled system. 
	In this case, a simple DMD approximation ($\psi(\bz) = \bz$) is sufficiently accurate, and we obtain a twelve-dimensional bilinear surrogate model with a time step of $\Delta t = 0.1$. 
	In comparison to solving the full system (where we use adaptive time stepping such that the CFL number is below 0.5), a speed-up factor of more than six orders of magnitude is obtained, and real-time applicability is achieved. 
	The resulting system behavior for a piecewise constant reference state is shown in Fig.~\ref{fig:fluidicPinball_Re100} and we observe excellent control performance.
	\begin{figure}
		\centering
		\parbox[b]{0.48\textwidth}{\centering (a) \\ \includegraphics[width=.48\textwidth]{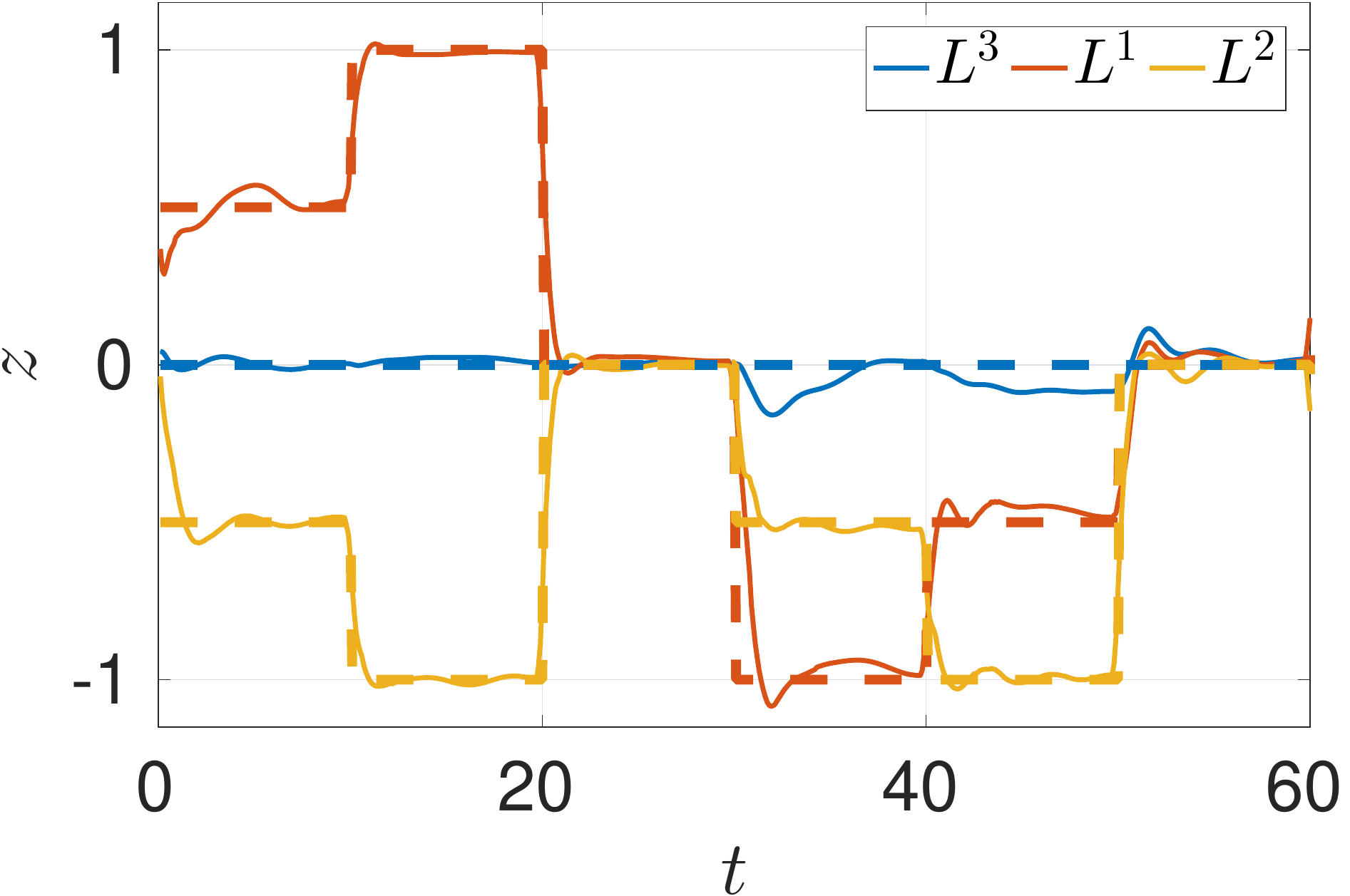}}
		\parbox[b]{0.48\textwidth}{\centering (b) \\ \includegraphics[width=.48\textwidth]{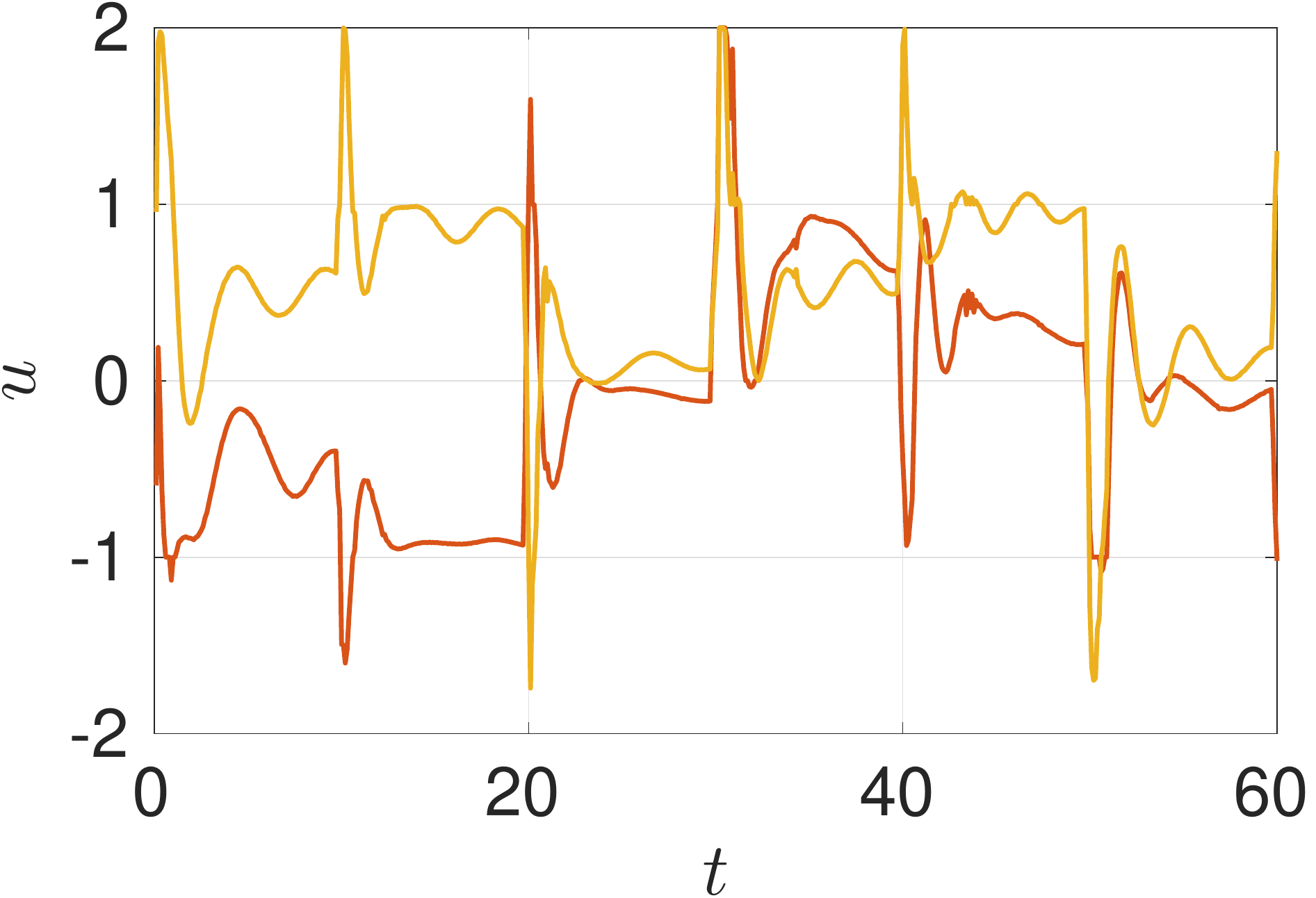}}
		\caption{(a) Lift and corresponding reference trajectories for the three lift forces at $Re = 100$, computed with the K-MPC approach with $\ell=5$. (b) The corresponding control inputs for the cylinders 1 and 2.}
		\label{fig:fluidicPinball_Re100}
	\end{figure}
	
	As the second case, we set $Re=140$ such that the system exhibits a mild
        form of chaos. 
	We see in Fig.~\ref{fig:fluidicPinball_Re140}~(a) that the fluctuations around the desired state are increased. 
	There are two possible reasons for the increased fluctuations:
	first, the system is chaotic and hence much more difficult to control; furthermore, the accuracy of the K-ROM is lower than for the quasi-periodic case such that the MPC problems \eqref{eq:MPC} and \eqref{eq:MPC_Koopman} do not necessarily possess the same solutions any longer. 
	Nevertheless, the control task is still performed quite satisfactorily, considering that we have replaced a nonlinear PDE with 
	$\approx150,000$ degrees of freedom by a twelve-dimensional linear system.
	\begin{figure}
		\centering
		\parbox[b]{0.48\textwidth}{\centering (a) \\ \includegraphics[width=.48\textwidth]{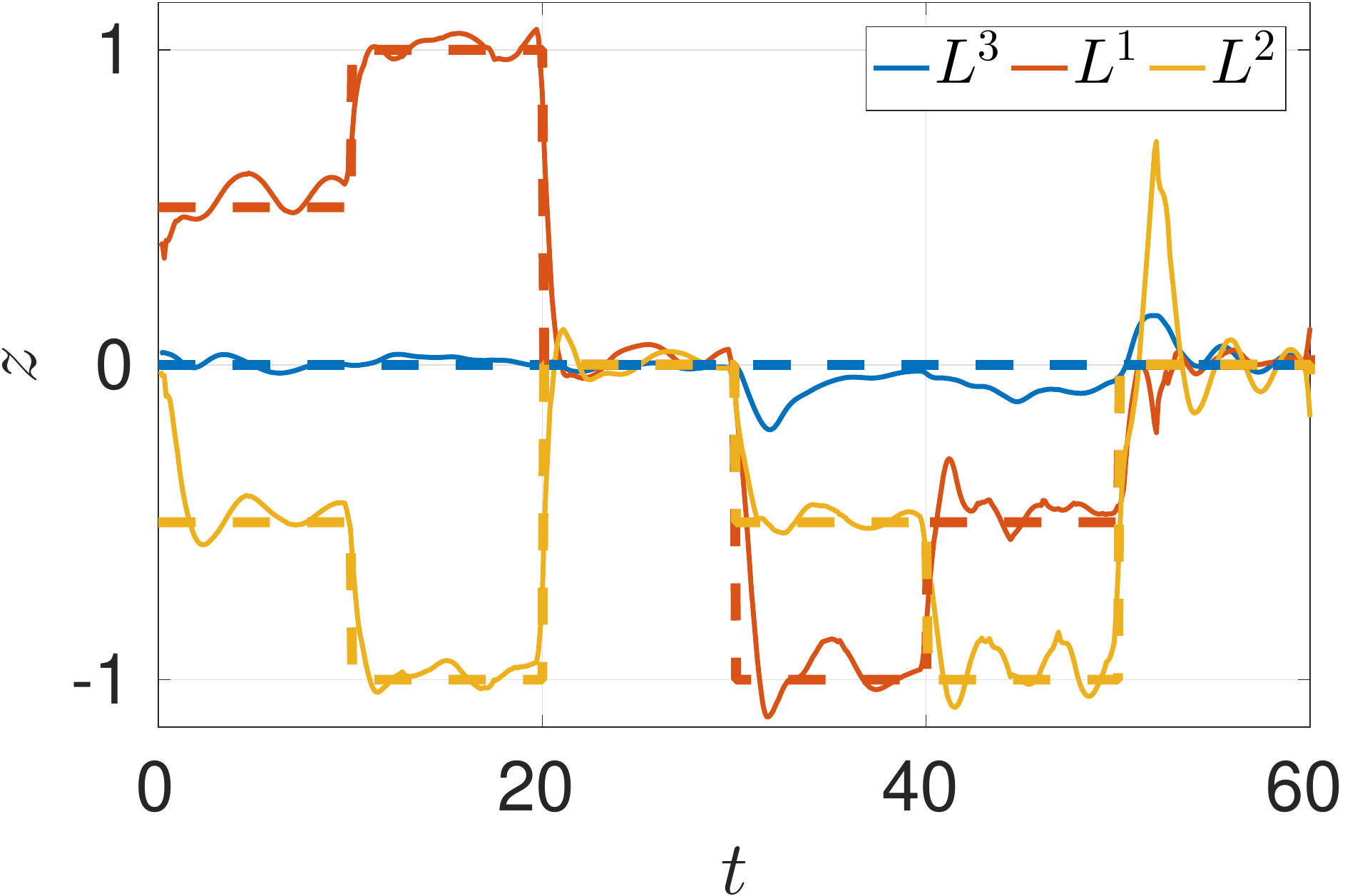}}
		\parbox[b]{0.48\textwidth}{\centering (b) \\ \includegraphics[width=.48\textwidth]{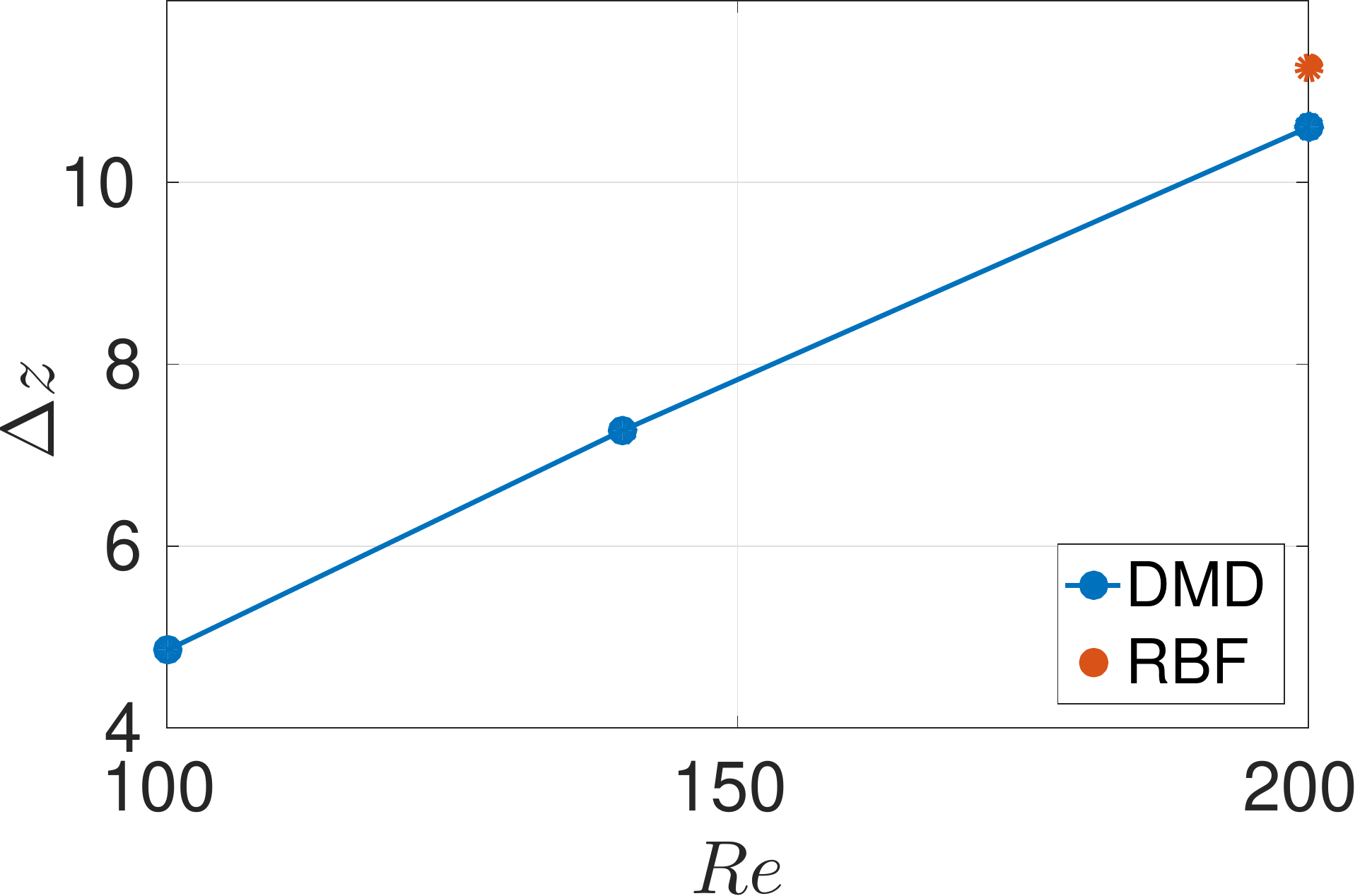}}
		\caption{(a) Similar to Fig.~\ref{fig:fluidicPinball_Re100} (a) but with $Re = 140$. (b) The control error $\Delta z$ with increasing Reynolds number.}
		\label{fig:fluidicPinball_Re140}
	\end{figure}
	
	When further increasing the Reynolds number well into the chaotic regime, we observe still stronger oscillations around the desired state; see Fig.~\ref{fig:fluidicPinball_Re140}~(b), where the tracking error $\Delta z = \int_{0}^{60}  \left \| z(t) - z^{\mathsf{ref}(t)}\right \|^2 \, dt$ is shown. 
	The increased oscillations are presumably due to the two reasons mentioned above: complexity of the dynamics and prediction quality. 
	In order to study the dependence of the prediction quality, we compare the approximation via DMD to one where we additionally place $1000$ radial basis functions in the 12-dimensional observation space as a Halton set (i.e., quasi-randomly). 
	The two runs are compared in Fig.~\ref{fig:fluidicPinball_Re200} and also in Fig.~\ref{fig:fluidicPinball_Re140}~(b). 
	We see that the much higher dimension of $\psi$ has no positive impact on the control performance, which indicates that the inaccuracy is mainly due to the chaotic dynamics.
	\begin{figure}
		\centering
		\parbox[b]{0.48\textwidth}{\centering (a) \\ \includegraphics[width=.48\textwidth]{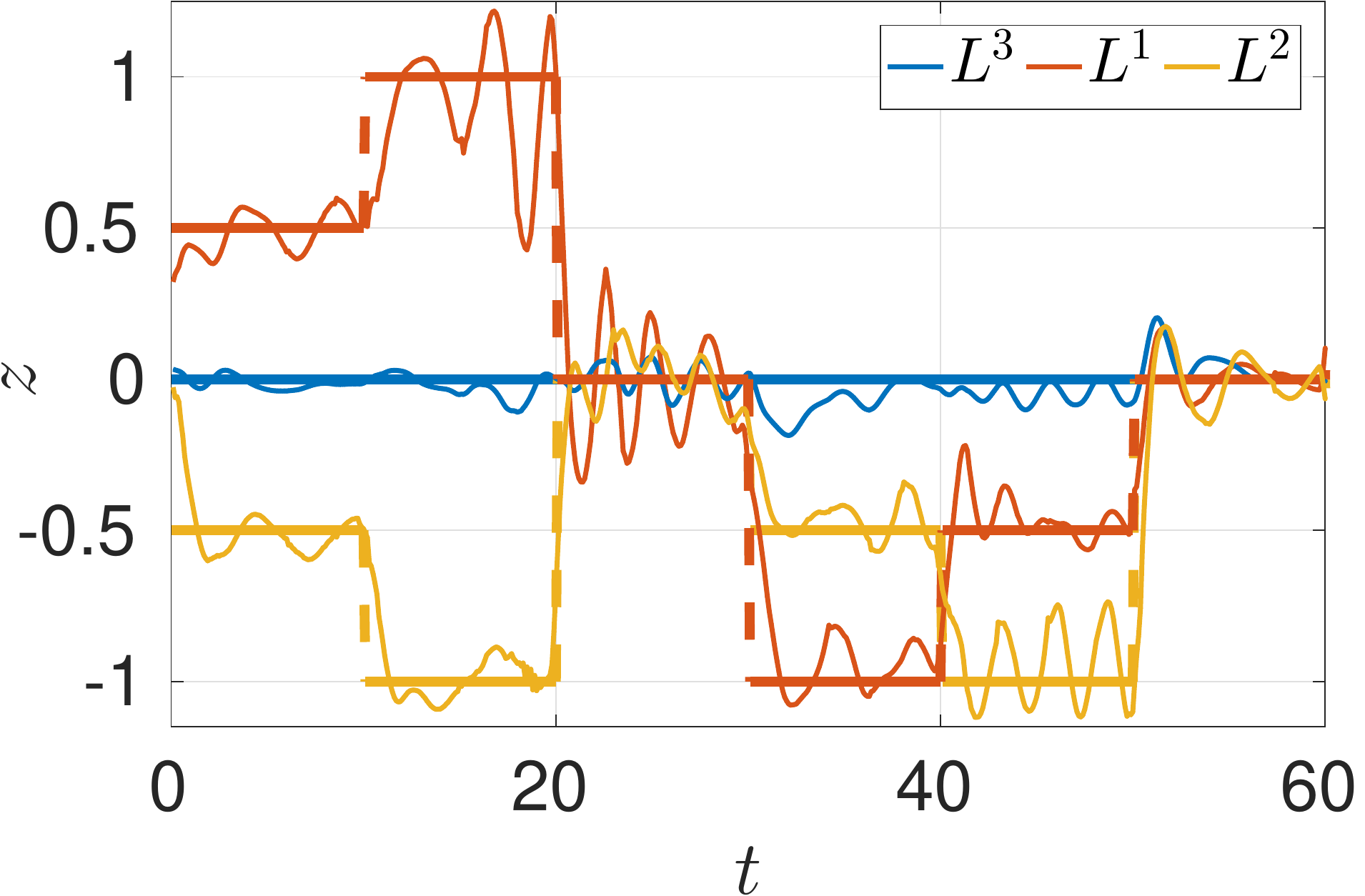}}
		\parbox[b]{0.48\textwidth}{\centering (b) \\ \includegraphics[width=.48\textwidth]{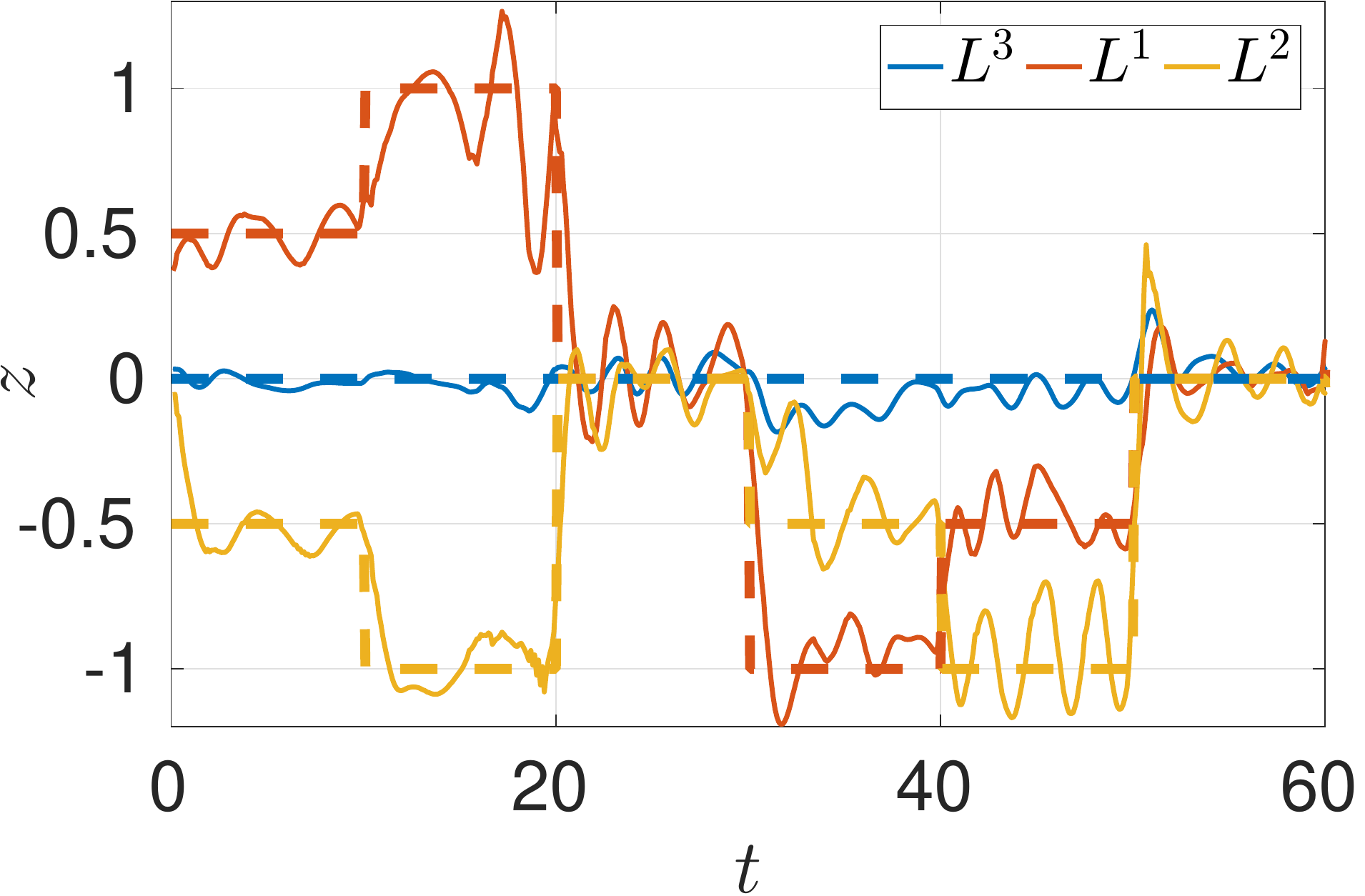}}
		\caption{Similar to Fig.~\ref{fig:fluidicPinball_Re100} (a) but with $Re = 200$. (a) Approximation via DMD as for the two previous cases. (b) Approximation via 1000 randomly distributed radial basis functions.}
		\label{fig:fluidicPinball_Re200}
	\end{figure}
	
	\section{Conclusion}
	\label{sec:Conclusion}
	
	We have presented a new approach for data-driven optimal control which is based on the Koopman generator. Multiple Koopman generators are approximated via EDMD at different constant control inputs, and intermediate control values can be approximated by linear interpolation between these operators, which yields a bilinear control system. 
	For control affine systems, the interpolation does not introduce an additional error. 
	Furthermore, when using the Koopman operator instead of the generator (which is often easier to compute numerically), the interpolation is accurate to first order in the timestep. Several examples show that the approach leads to very good results even in situations where this assumption does not hold. 
	
	Due to the larger step sizes and the linearity of the K-ROM, the reduced model can be solved significantly faster, in the case of the 2D Navier--Stokes equations by five to six orders of magnitude. An additional benefit is that since the model is bilinear, we can use efficient solution methods for the reduced control problem. Due to the restriction to several autonomous systems, the training data requirements are very low.
	
	One further direction of research is to develop stronger statements about the error for the K-ROM approach, e.g., concerning the basis size or the required data. From a control theoretic perspective, it would be very interesting to investigate whether the notion of controllability can be carried over to nonlinear systems. Moreover, feedback controllers specifically tailored to bilinear systems (using a time-dependent Riccatti equation, for instance) could be used instead of MPC to further increase the real-time capability. In terms of numerical efficiency, automated methods for choosing appropriate basis functions for the system dynamics (e.g., via dictionary learning \cite{LDBK17}) or Koopman approximations based on neural networks \cite{OR19} could help to further improve the prediction accuracy and consequently, the range of applicability.
	
	\bibliographystyle{unsrt}
	\bibliography{Bibliography}

\begin{thebibliography}{10}

\bibitem{PK19}
S.~Peitz and S.~Klus.
\newblock {Koopman} operator-based model reduction for switched-system control
  of PDEs.
\newblock {\em Automatica}, 106:184--191, 2019.

\bibitem{BN15}
S.~L. Brunton and B.~R. Noack.
\newblock Closed-loop turbulence control: Progress and challenges.
\newblock {\em Applied Mechanics Reviews}, 67(5):1--48, 2015.

\bibitem{GP17}
L.~Gr{\"{u}}ne and J.~Pannek.
\newblock {\em Nonlinear Model Predictive Control}.
\newblock Springer International Publishing, 2 edition, 2017.

\bibitem{DBN17}
T.~Duriez, S.~L. Brunton, and B.~R. Noack.
\newblock {\em Machine Learning Control – Taming Nonlinear Dynamics and
  Turbulence}.
\newblock Springer, 2017.

\bibitem{LMQR14}
T.~Lassila, A.~Manzoni, A.~Quarteroni, and G.~Rozza.
\newblock Model order reduction in fluid dynamics: challenges and perspectives.
\newblock In Alfio Quarteroni and Gianluigi Rozza, editors, {\em Reduced Order
  Methods for Modeling and Computational Reduction}, pages 235--273. Springer,
  Cham, 2014.

\bibitem{BGW15}
P.~Benner, S.~Gugercin, and K.~Willcox.
\newblock A survey of projection-based model reduction methods for parametric
  dynamical systems.
\newblock {\em SIAM Review}, 57(4):483--531, 2015.

\bibitem{Sir87}
L.~Sirovich.
\newblock Turbulence and the dynamics of coherent structures part i: coherent
  structures.
\newblock {\em Quarterly of Applied Mathematics}, XLV(3):561--571, 1987.

\bibitem{KV99}
K.~Kunisch and S.~Volkwein.
\newblock Control of the {Burgers} equation by a reduced-order approach using
  proper orthogonal decomposition.
\newblock {\em Journal of Optimization Theory and Applications},
  102(2):345--371, 1999.

\bibitem{Row05}
C.~W. Rowley.
\newblock Model reduction for fluids, using balanced proper orthogonal
  decomposition.
\newblock {\em International Journal of Bifurcation and Chaos},
  15(3):997--1013, 2005.

\bibitem{HV05}
M.~Hinze and S.~Volkwein.
\newblock Proper orthogonal decomposition surrogate models for nonlinear
  dynamical systems: Error estimates and suboptimal control.
\newblock In P.~Benner, D.~C. Sorensen, and V.~Mehrmann, editors, {\em
  Reduction of Large-Scale Systems}, volume~45, pages 261--306. Springer Berlin
  Heidelberg, 2005.

\bibitem{BDPV18}
D.~Beermann, M.~Dellnitz, S.~Peitz, and S.~Volkwein.
\newblock {Set-Oriented Multiobjective Optimal Control of PDEs using Proper
  Orthogonal Decomposition}.
\newblock In {\em Reduced-Order Modeling (ROM) for Simulation and
  Optimization}, pages 47--72. Springer, 2018.

\bibitem{Koo31}
B.~O. Koopman.
\newblock {Hamiltonian} systems and transformations in {Hilbert} space.
\newblock {\em Proceedings of the National Academy of Sciences},
  17(5):315--318, 1931.

\bibitem{Mez05}
I.~Mezi{\'{c}}.
\newblock Spectral properties of dynamical systems, model reduction and
  decompositions.
\newblock {\em Nonlinear Dynamics}, 41:309--325, 2005.

\bibitem{BMM12}
M.~Budi{\v{s}}i{\'{c}}, R.~Mohr, and I.~Mezi{\'{c}}.
\newblock {Applied Koopmanism}.
\newblock {\em Chaos}, 22, 2012.

\bibitem{Sch10}
P.~J. Schmid.
\newblock Dynamic mode decomposition of numerical and experimental data.
\newblock {\em Journal of Fluid Mechanics}, 656:5--28, 2010.

\bibitem{RMB+09}
C.~W. Rowley, I.~Mezi{\'{c}}, S.~Bagheri, P.~Schlatter, and D.~S. Henningson.
\newblock Spectral analysis of nonlinear flows.
\newblock {\em Journal of Fluid Mechanics}, 641:115--127, 2009.

\bibitem{TRL+14}
J.~H. Tu, C.~W. Rowley, D.~M. Luchtenburg, S.~L. Brunton, and J.~N. Kutz.
\newblock On dynamic mode decomposition: Theory and applications.
\newblock {\em Journal of Computational Dynamics}, 1(2):391--421, 2014.

\bibitem{KGPS18}
S.~Klus, P.~Gel{\ss}, S.~Peitz, and C.~Sch{\"{u}}tte.
\newblock Tensor-based dynamic mode decomposition.
\newblock {\em Nonlinearity}, 31(7):3359--3380, 2018.

\bibitem{WKR15}
M.~O. Williams, I.~G. Kevrekidis, and C.~W. Rowley.
\newblock A data-driven approximation of the {Koopman} operator: Extending
  dynamic mode decomposition.
\newblock {\em Journal of Nonlinear Science}, 25(6):1307--1346, 2015.

\bibitem{KKS16}
S.~Klus, P.~Koltai, and C.~Sch{\"{u}}tte.
\newblock On the numerical approximation of the {Perron-Frobenius} and
  {Koopman} operator.
\newblock {\em Journal of Computational Dynamics}, 3(1):51--79, 2016.

\bibitem{KNP+19}
S.~Klus, F.~N{\"{u}}ske, S.~Peitz, J.-H. Niemann, C.~Clementi, and
  C.~Sch{\"{u}}tte.
\newblock {Data-driven approximation of the Koopman generator: Model reduction,
  system identification, and control}.
\newblock {\em Physica D: Nonlinear Phenomena}, 2020.

\bibitem{PBK15}
J.~L. Proctor, S.~L. Brunton, and J.~N. Kutz.
\newblock Dynamic mode decomposition with control.
\newblock {\em SIAM Journal on Applied Dynamical Systems}, 15(1):142--161,
  2015.

\bibitem{PBK18}
J.~L. Proctor, S.~L. Brunton, and J.~N. Kutz.
\newblock Generalizing {Koopman} theory to allow for inputs and control.
\newblock {\em SIAM Journal on Applied Dynamical Systems}, 17(1):909--930,
  2018.

\bibitem{BBPK16}
S.~L. Brunton, B.~W. Brunton, J.~L. Proctor, and J.~N. Kutz.
\newblock {Koopman} invariant subspaces and finite linear representations of
  nonlinear dynamical systems for control.
\newblock {\em PLoS ONE}, 11(2):1--19, 2016.

\bibitem{KM18b}
M.~Korda and I.~Mezi{\'{c}}.
\newblock Linear predictors for nonlinear dynamical systems: {Koopman} operator
  meets model predictive control.
\newblock {\em Automatica}, 93:149--160, 2018.

\bibitem{KKB17}
E.~Kaiser, J.~N. Kutz, and S.~L. Brunton.
\newblock Data-driven discovery of {Koopman} eigenfunctions for control.
\newblock {\em arXiv:1707.0114}, 2017.

\bibitem{Williams2016extending}
M.~O. Williams, M.~S. Hemati, S.~T.~M. Dawson, I.~G. Kevrekidis, and C.~W.
  Rowley.
\newblock Extending data-driven {Koopman} analysis to actuated systems.
\newblock {\em IFAC-PapersOnLine}, 49(18):704--709, 2016.

\bibitem{KM18a}
M.~Korda and I.~Mezi{\'{c}}.
\newblock On convergence of extended dynamic mode decomposition to the
  {Koopman} operator.
\newblock {\em Journal of Nonlinear Science}, 28(2):687--710, 2018.

\bibitem{PK20}
S.~Peitz and S.~Klus.
\newblock {Feedback Control of Nonlinear PDEs Using Data-Efficient Reduced
  Order Models Based on the Koopman Operator}.
\newblock In A.~Mauroy, I.~Mezi{\'{c}}, and Y.~Suzuki, editors, {\em The
  Koopman Operator in Systems and Control: Concepts, Methodologies and
  Applications}, pages 257--282. Springer, 2020.

\bibitem{Goswami2017global}
D.~Goswami and D.~A. Paley.
\newblock Global bilinearization and controllability of control-affine
  nonlinear systems: A {Koopman} spectral approach.
\newblock In {\em 2017 IEEE 56th Annual Conference on Decision and Control
  (CDC)}, pages 6107--6112. IEEE, 2017.

\bibitem{Sag09}
S.~Sager.
\newblock Reformulations and algorithms for the optimization of switching
  decisions in nonlinear optimal control.
\newblock {\em Journal of Process Control}, 19(8):1238--1247, 2009.

\bibitem{DPMN18}
N.~Deng, L.~Pastur, M.~Morzy{\'{n}}ski, and B.~R. Noack.
\newblock Route to chaos in the fluidic pinball.
\newblock In {\em ASME 2018 Fluids Engineering Division Summer Meeting}, 2018.

\bibitem{LaMa94}
A.~Lasota and M.~C. Mackey.
\newblock {\em Chaos, fractals, and noise: Stochastic aspects of dynamics},
  volume~97 of {\em Applied Mathematical Sciences}.
\newblock Springer, 2nd edition, 1994.

\bibitem{SBR09}
S.~Sager, H.~G. Bock, and G.~Reinelt.
\newblock Direct methods with maximal lower bound for mixed-integer optimal
  control problems.
\newblock {\em Mathematical Programming}, 118(1):109--149, 2009.

\bibitem{van2014probability}
R.~{van Handel}.
\newblock Probability in high dimension.
\newblock Technical report, PRINCETON UNIV NJ, 2016.

\bibitem{liggett2010continuous}
T.~M. Liggett.
\newblock {\em Continuous time {Markov} processes: an introduction}, volume
  113.
\newblock American Mathematical Soc., 2010.

\bibitem{Kolmogoroff1931}
A.~Kolmogoroff.
\newblock Über die analytischen methoden in der wahrscheinlichkeitsrechnung.
\newblock {\em Mathematische Annalen}, 104:415--458, 1931.

\bibitem{FP02}
J.~H. Ferziger and M.~Peric.
\newblock {\em Computational Methods for Fluid Dynamics}.
\newblock Springer Berlin Heidelberg, 3 edition, 2002.

\bibitem{JJT07}
H.~Jasak, A.~Jemcov, and Z.~Tukovic.
\newblock {OpenFOAM} : A c++ library for complex physics simulations.
\newblock {\em International Workshop on Coupled Methods in Numerical
  Dynamics}, pages 1--20, 2007.

\bibitem{fletcher2013newton}
R.~Fletcher.
\newblock {\em Newton-Like Methods}, chapter~3, pages 44--79.
\newblock John Wiley \& Sons, Ltd, 2013.

\bibitem{JMO05}
O.~Junge, J.~E. Marsden, and S.~Ober-Bl{\"o}baum.
\newblock Discrete mechanics and optimal control.
\newblock {\em IFAC Proceedings Volumes}, 38(1):538--543, 2005.
\newblock 16th IFAC World Congress.

\bibitem{BBP+17}
S.~L. Brunton, B.~W. Brunton, J.~L. Proctor, E.~Kaiser, and J.~N. Kutz.
\newblock Chaos as an intermittently forced linear system.
\newblock {\em Nature Communications}, 8(19):1--9, 2017.

\bibitem{BPB+20}
K.~Bieker, S.~Peitz, S.~L. Brunton, J.~N. Kutz, and M.~Dellnitz.
\newblock {Deep model predictive flow control with limited sensor data and
  online learning}.
\newblock {\em Theoretical and Computational Fluid Dynamics}, 2020.

\bibitem{AHKO12}
F.~Albrecht, B.~Haasdonk, S.~Kaulmann, and M.~Ohlberger.
\newblock The localized reduced basis multiscale method.
\newblock In {\em Proceedings of ALGORITHMY 2012}, pages 393--403, 2012.

\bibitem{LDBK17}
Q.~Li, F.~Dietrich, E.~M. Bollt, and I.~G. Kevrekidis.
\newblock Extended dynamic mode decomposition with dictionary learning: A
  data-driven adaptive spectral decomposition of the {Koopman} operator.
\newblock {\em Chaos: An Interdisciplinary Journal of Nonlinear Science},
  27(10):103111, 2017.

\bibitem{OR19}
S.~E. Otto and C.~W. Rowley.
\newblock {Linearly recurrent autoencoder networks for learning dynamics}.
\newblock {\em SIAM Journal on Applied Dynamical Systems}, 18(1):558--593,
  2019.

\end{thebibliography}
	
	\appendix
	\section{Koopman-Kolmogorov Theorem Proof}
	\label{app:KoopmanKolmogorovProof}
	\begin{proof}
		Consider a trajectory $\bx(t)$ of the system starting from any initial condition $\bx_0\in\mathcal{X}$.
		The time derivative of an observable $\psi\in\Vcal$ along the trajectory $\bx(t)$ may be expressed in terms of the Koopman operator as
		\begin{equation*}
			\frac{d}{d t} \psi(\bx(t)) = 
			\frac{d}{d t} (\Kcal_{\bubar}^t\psi)(\bx_0) =
			\frac{d}{d t} \psi (\bPhi_{\bubar}^t(\bx_0)).
		\end{equation*}
		Since the flow is autonomous, the flow map obeys
		\begin{equation*}
		\bPhi_{\bubar}^{t+\Delta t}(\bx_0) = 
		\bPhi_{\bubar}^{\Delta t}(\bPhi_{\bubar}^t(\bx_0)),
		\end{equation*}
		allowing us to express the time derivative along the trajectory as
		\begin{equation*}
		\frac{d}{d t} (\Kcal_{\bubar}^t\psi)(\bx_0) =
		\lim_{\Delta t\to 0^+} \frac{\psi(\bPhi_{\bubar}^{\Delta t}(\bPhi_{\bubar}^t(\bx_0))) - \psi(\bPhi_{\bubar}^t(\bx_0))}{\Delta t}.
		\end{equation*}
		By definition of the Koopman generator, the right hand side of the above equation reduces to
		\begin{equation*}
		\frac{d}{d t} (\Kcal_{\bubar}^t\psi)(\bx_0) =
		(\Kcal_{\bubar}\psi)(\bPhi_{\bubar}^t(\bx_0)),
		\end{equation*}
		and by definition of the Koopman operator,
		\begin{equation*}
		\frac{d}{d t} (\Kcal_{\bubar}^t\psi)(\bx_0) = (\Kcal_{\bubar}^t \Kcal_{\bubar}\psi)(\bx_0).
		\end{equation*}
		This proves Eq.~\eqref{eqn:KoopmanOperatorODE} because $\psi\in\Vcal$ and $\bx_0\in\mathcal{X}$ were arbitrary.
		By direct substitution, one can verify that 
		\begin{equation*}
		(\Kcal_{\bubar}^t\psi)({\bx_0}) = \sum_{p=0}^{\infty}\frac{t^p}{p!}((\Kcal_{\bubar})^p\psi)(\bx_0)
		\end{equation*}
		satisfies the above ordinary differential equation, proving the validity of the expansion (\ref{eqn:KoopmanExponential}).
	\end{proof}
	
	\section{Efficient Newton-solver}
	\label{app:NewtonSolver}
	We here show that the special bilinear structure of the K-ROM enables an efficient Newton-type solver.
	For reasons that will become clear, let us assume that the objective function takes the quadratic form
	\begin{multline}
		\hat{L}(\mathbf{z}(t),\ \mathbf{u}(t),\ t) = (\mathbf{z}(t) - \mathbf{a}(t))^T\mathbf{Q}(t)(\mathbf{z}(t) - \mathbf{a}(t)) + \mathbf{u}(t)^T\mathbf{R}(t)\mathbf{u}(t),
	\end{multline}
	where $\mathbf{Q}(t)$ and $\mathbf{R}(t)$ are symmetric, positive definite matrices.
	This means that the functions
	\begin{equation*}
		\boldsymbol{\gamma}(t) = 2\mathbf{Q}(t)\left[\mathbf{z}(t) - \mathbf{a}(t)\right] \quad \mbox{and}\quad
		\boldsymbol{\rho}(t) = 2\mathbf{R}(t)\mathbf{u}(t)
	\end{equation*}
	are linear with respect to $\mathbf{z}(t)$ and $\mathbf{u}(t)$.
	
	Rather than minimizing the cost function by gradient descent, it is possible to solve for the input signal $\mathbf{u}\in\mathcal{F}([t_0,t_e0,\mathcal{U}])$ and resulting trajectory $\mathbf{z}$ that simultaneously satisfy the model dynamics and $\nabla_{\hat{u}_k}J = 0$ for all $k=1$, $\ldots$, $d$.
	This is done by simultaneously solving the dynamics eq.~\eqref{eqn:Continuous_EDMD_ROM} together with the adjoint equation eq.~\eqref{eqn:generatorModelAdjoint} and the optimality condition $\nabla_{\hat{u}_k}J = 0$ for all $k=1$, $\ldots$, $d$ with gradients found by eq.~\eqref{eqn:ObjectiveGradient} using a Newton method.
	
	Let us first restrict $\mathbf{u}$ to the subspace $\mathcal{F}([t_0,t_e0,\mathcal{U}])$ and define
	\begin{equation*}
	\mathbf{\hat{B}}_{k}(t) = \sum_{i=1}^{n_c}\varphi_{k,i}(t) \mathbf{B}_{\mathbf{e}_i}, \quad
	\mathbf{\hat{R}} = \int_{t_0}^{t_e}\boldsymbol{\varphi}(t)^T\mathbf{R}(t)\boldsymbol{\varphi}(t)\ dt,
	\end{equation*}
	yielding the transformation identities
	\begin{equation*}
	\sum_{i=1}^{n_c}u_i(t)\mathbf{B}_{\mathbf{e}_i} = 
	\sum_{k=1}^d\hat{u}_k \mathbf{\hat{B}}_{k}(t),
	\end{equation*}
	\begin{equation*}
	\int_{t_0}^{t_e}\mathbf{u}(t)^T\mathbf{R}(t)\mathbf{u}(t)\ dt = \mathbf{\hat{u}}^T\mathbf{\hat{R}}\mathbf{\hat{u}}
	\end{equation*}
	respectively.
	Using the Newton method, we want to solve for the discretized input $\mathbf{\hat{u}}\in\mathbb{R}^d$ and the signals $\mathbf{z}$ and $\boldsymbol{\lambda}$ that make the residuals
	\begin{equation}
	\label{eqn:DynamicsResidual}
	\mathbf{r}_{\mathbf{z}}(t) = \mathbf{\dot{z}}(t) - \left(\bK_{\bzero}+ \sum_{k=1}^d\hat{u}_k \mathbf{\hat{B}}_{k}(t)\right) \bz(t),
	\end{equation}
	\begin{equation}
	\label{eqn:AdjointResidual}
	\mathbf{r}_{\boldsymbol{\lambda}}(t) = 2\mathbf{Q}(t)\left[\mathbf{z}(t) - \mathbf{a}(t)\right] + \boldsymbol{\dot{\lambda}}(t) + \left(\mathbf{K}_{\mathbf{0}} + \sum_{k=1}^d\hat{u}_k \mathbf{\hat{B}}_{k}(t) \right)^T\boldsymbol{\lambda}(t),
	\end{equation}
	\begin{equation}
	\label{eqn:OptimalityResidual}
	\mathbf{r}_{\mathbf{\hat{u}}} = \int_{t_0}^{t_e} \boldsymbol{\varphi}(t)^T
	\begin{bmatrix}
	\mathbf{z}(t)^T\mathbf{B}_{\mathbf{e}_1}^T \\
	\vdots \\
	\mathbf{z}(t)^T\mathbf{B}_{\mathbf{e}_{n_c}}^T
	\end{bmatrix}\boldsymbol{\lambda}(t)\ dt
	+ 2\mathbf{\hat{R}}\mathbf{\hat{u}},
	\end{equation}
	equal to zero. Notice that each of the above equations is bilinear, making it very easy to compute the derivatives of the residuals with respect to the variables $\mathbf{\hat{u}}$, $\mathbf{z}$, and $\boldsymbol{\lambda}$.
	Each step of the Newton solver computes updates $\Delta \mathbf{z}$, $\Delta \boldsymbol{\lambda}$, and $\Delta \mathbf{\hat{u}}$ on the current iterates $\mathbf{z}$, $\boldsymbol{\lambda}$, and $\mathbf{\hat{u}}$ by solving the linear system
	\begin{equation*}
	\begin{bmatrix}
	\mathcal{D}_{\mathbf{z}}\mathbf{r}_{\mathbf{z}} & \mathbf{0} & \mathcal{D}_{\mathbf{\hat{u}}}\mathbf{r}_{\mathbf{z}} \\
	\mathcal{D}_{\mathbf{z}}\mathbf{r}_{\boldsymbol{\lambda}} & \mathcal{D}_{\boldsymbol{\lambda}}\mathbf{r}_{\boldsymbol{\lambda}} & \mathcal{D}_{\mathbf{\hat{u}}}\mathbf{r}_{\boldsymbol{\lambda}} \\
	\mathcal{D}_{\mathbf{z}}\mathbf{r}_{\mathbf{\hat{u}}} & \mathcal{D}_{\boldsymbol{\lambda}}\mathbf{r}_{\mathbf{\hat{u}}} & \mathcal{D}_{\mathbf{\hat{u}}}\mathbf{r}_{\mathbf{\hat{u}}}
	\end{bmatrix}_{(\mathbf{z},\boldsymbol{\lambda},\mathbf{\hat{u}})}
	\begin{bmatrix}
	\Delta \mathbf{z} \\
	\Delta \boldsymbol{\lambda} \\
	\Delta \mathbf{\hat{u}}
	\end{bmatrix} = -
	\begin{bmatrix}
	\mathbf{r}_{\mathbf{z}} \\
	\mathbf{r}_{\boldsymbol{\lambda}} \\
	\mathbf{r}_{\mathbf{\hat{u}}}
	\end{bmatrix}_{(\mathbf{z},\boldsymbol{\lambda},\mathbf{\hat{u}})}.
	\label{eqn:NewtonSystem}
	\end{equation*}
	The iterates are updated according to
	\begin{equation*}
	(\mathbf{z},\ \boldsymbol{\lambda},\ \mathbf{\hat{u}}) \leftarrow (\mathbf{z},\ \boldsymbol{\lambda},\ \mathbf{\hat{u}}) + (\Delta\mathbf{z},\ \Delta\boldsymbol{\lambda},\ \Delta\mathbf{\hat{u}})
	\end{equation*}
	during each step until convergence.
	
	We recommend using an iterative Krylov subspace algorithm like the Generalized Minimal Resisual Method (GMRES) to solve the system eq.~\eqref{eqn:NewtonSystem}. 
	This is because it is very easy to compute operator-vector products like $\begin{bmatrix}
	\mathcal{D}_{\mathbf{z}}\mathbf{r}_{\mathbf{\hat{u}}} & \mathcal{D}_{\boldsymbol{\lambda}}\mathbf{r}_{\mathbf{\hat{u}}} & \mathcal{D}_{\mathbf{\hat{u}}}\mathbf{r}_{\mathbf{\hat{u}}} \end{bmatrix} \mathbf{h}$, for example, by applying linearized versions of the residual equations \eqref{eqn:DynamicsResidual}, \eqref{eqn:AdjointResidual}, \eqref{eqn:OptimalityResidual}, directly to $\mathbf{h}$, rather than discretizing and storing the derivative operators as large matrices.
	
	\begin{remark}[The adjoints of bilinear models have favorable properties]
		If the model Eq.~\eqref{eqn:Continuous_EDMD_ROM} had general nonlinearities, then the Jacobian matrices would appear in the adjoint equation Eq.~\eqref{eqn:generatorModelAdjoint}.
		In the general case, these Jacobian matrices depend on $\mathbf{z}(t)$ and $\mathbf{u}(t)$ so that Eq.~\eqref{eqn:AdjointResidual} is no longer bilinear. 
		The Newton method would then require differentiating the Jacobian matrices appearing in Eq.~\eqref{eqn:AdjointResidual} with respect to $\mathbf{z}(t)$ and $\mathbf{u}(t)$ to find Hessians of the model dynamics.
		These additional terms make solving the optimization problem more costly, which could pose a problem for MPC applications.
	\end{remark}
	
\end{document}